\numberwithin{equation}{section}
\newtheorem{thm}{Theorem}[section]
\newtheorem{prop}[thm]{Proposition}
\newtheorem{cor}[thm]{Corollary}
\newtheorem{lemma}[thm]{Lemma}
\newtheorem{preremark}[thm]{Remark}
\newtheorem*{nota}{Notation}
\newenvironment{remark}{\begin{preremark}\rm}{\medskip \end{preremark}}
\numberwithin{equation}{section}
\newcommand{\norm}[1]{\left\Vert#1\right\Vert}
\newcommand{\R}{\mathbb R}
\newcommand{\eps}{\varepsilon}
\newcommand{\grad} {\nabla}
\newcommand{\bdary} {\partial}
\newcommand{\dd} {\; \mathrm{d}}
\newcommand{\diag} {\mathrm{diag}}
\newcommand{\Jap}[1]{\langle #1 \rangle}
\newcommand{\SP}{\mathcal{S}}
\newcommand{\AN}{\mathcal{A}}
\definecolor{sh}{RGB}{255,0,100}
\begin{document}

\begin{abstract}
    We prove a lower bound for the entropy dissipation of the Landau equation with Coulomb potentials by a weighted Lebesgue norm $L^3_{-5/3}$. In particular, we enhance the weight exponent from $-5$, which was established by Desvillettes in \cite{desvillettes2015entropy}, to $-5/3$. Moreover, we prove that the weighted Lebesgue norm $L^3_{-5/3}$ is optimal for both exponents.
\end{abstract}

\title{Entropy dissipation estimates for the Landau equation with Coulomb potentials}
\author{Sehyun Ji}
\address[Sehyun Ji]{Department of Mathematics, University of Chicago,  Chicago, Illinois 60637, USA}
\email{jise0624@uchicago.edu}
\maketitle

\section{Introduction}
 We study the entropy dissipation for the Landau equation with Coulomb potentials. The Landau equation is a kinetic equation that models the evolution of a distribution of particles in a plasma or a diluted gas. The distribution $f(t,x,v) : \R_{\geq 0} \times\R^3 \times \R^3 \to \R_{\geq 0}$ describes the density of particles with respect to their velocity and position at any given time. The Landau equation is given by
\begin{equation} \label{e: Landau}
    \partial_t f(t,x,v)+v\cdot \grad_x f(t,x,v) = Q(f,f)(t,x,v), \quad f(0,x,v)=f_0(x,v).
\end{equation}
The Landau collision operator $Q$ acts on $f(t,x,\cdot)$ for fixed values of $(t,x)$ and is defined as\footnote{In the paper, we omit $(t,x)$ in $f(t,x,v)$ and write $f(v)$ when $(t,x)$ is fixed.}
\begin{equation}
    Q(f,f)(v) := \partial_i  \int_{\R^3} a_{ij}(v-w) \left(
    f(w) \partial_j f(v) - f(v)\partial_j f(w) \right) \dd w,
\end{equation}
where the coefficient matrix $(a_{ij})_{1\leq i,j \leq 3}$ is given by
\begin{equation}
a_{ij}(v):= |v|^{\gamma+2} \left(\delta_{ij}-\frac{v_iv_j}{|v|^2} \right),
\end{equation}
with $\gamma \geq -3$. In the space homogeneous case, the equation does not depend on $x$, hence \eqref{e: Landau} becomes
\begin{equation} \label{e: homogeneous}
    \partial_t f(t,v)=Q(f,f)(t,v),\quad f(0,v)=f_0(v).
\end{equation}
\let\thefootnote\relax
\footnote{2020  Mathematics Subject Classification : 35B45}
\footnote{Keywords: the Landau-Coulomb equation, entropy dissipation, the weighted Lebesgue norm, a priori estimate, the Landau equation with very soft potentials}

The regime $\gamma \in[-3,-2)$ is referred to as very soft potentials. In particular, $\gamma=-3$ is referred to as Coulomb potentials, henceforth the equation \eqref{e: Landau} is known as the Landau-Coulomb equation. 
In the case of very soft potentials, the diffusion matrix $(a_{ij})_{1\leq i,j \leq 3}$ has a singularity at the origin and becomes more singular as $\gamma$ approaches $-3$. 
 The Landau-Coulomb equation has the most singular diffusion among the very soft potentials, and thus is the most interesting case. Hence, we focus on studying the Landau-Coulomb equation. It has been recently proved by Guillen and Silvestre in \cite{guillen2023global} that the Landau-Coulomb equation has a global smooth solution for any reasonable initial data. See e.g. \cite{golse2022partial, golse2022partialarxiv, imbert2021regularity, silvestre2022regularity} for related questions and works on the Landau-Coulomb equation and the Landau equation with very soft potentials.

The entropy dissipation is important in the study of the Landau and Boltzmann equations. For example, it was used to introduce $H$-solutions for the Landau and Boltzmann equations in \cite{villani1998new} by Villani. Another example is the construction of renormalized solutions of the Landau and Boltzmann equations in \cite{alexandre2004landau} by Alexandre and Villani. See \cite{ alexandre2000entropy, chaker2020coercivity, desvillettes2005boltzmann, gressman2011sharp, mouhot2006linear, villani1999regularity} for other works studying entropy dissipation and its application for the Landau and Boltzmann equations.

 In the present paper, we are interested in studying the coercivity of the entropy dissipation of the Landau-Coulomb equation. The entropy dissipation $D(f)$ is given by the following formula 
\begin{align} 
D(f)&:= -\int_{\R^3} Q(f,f)(v) \log f(v) \dd v\\
\label{e: entropy dissipation}
    &=\frac 1 2  \iint_{\R^3\times \R^3} a_{ij}(v-w)  f(v)f(w) \left(\frac{\partial_i f(v)}{f(v)} -\frac{\partial_i f(w)}{f(w)} \right)\left(\frac{\partial_j f(v)}{f(v)} -\frac{\partial_j f(w)}{f(w)} \right) \dd v \dd w
\end{align}
for fixed values of $(t,x)$.
It is clear $D(f)$ is nonnegative from the formula.

We impose assumptions on macroscopic hydrodynamic quantities: mass, energy, and entropy. Our a priori estimate will depend on these quantities. More precisely, we assume there are lower and upper bounds on mass, an upper bound on energy, and an upper bound on entropy for each $(t,x)$:
\begin{gather} 
    \label{e: hydronamic1}
    0< m_0 \leq \int_{\R^3} f(v)  \dd v \leq M_0,\\
    \label{e: hydronamic2}
    \int_{\R^3} f(v) |v|^2 \dd v \leq E_0 ,\\
    \label{e: hydronamic3}
    \int_{\R^3} f(v) \log f(v) \dd v \leq H_0.
\end{gather}

In the space homogeneous setting, it is well know that the mass, momentum, and energy are conserved in time and the entropy is monotone decreasing in time. Therefore, we only need to assume the hydrodynamic bounds \eqref{e: hydronamic1},\eqref{e: hydronamic2}, and \eqref{e: hydronamic3} to hold initially.\\ 

We introduce some notations for our convenience.
\begin{nota}
We use the Japanese bracket convention $\Jap{v} := \sqrt{1+|v|^2}$.
\end{nota}

\begin{nota}
We define the weighted Lebesgue norm $L^p_{-q}$ for $p \geq 1$ and $q\in \R$ for a space $X$ as
\begin{equation*}
    \norm{f}_{L^p_{-q}(X)} :=\left(\int_{X} {|f(v)|^p} \Jap{v}^{-pq} \dd v\right)^{1/p}.
\end{equation*}
\end{nota} 
\noindent

\begin{nota}
For non-negative $Y$ and $Z$, we write $Y\gtrsim Z$ or $Z \lesssim Y$ if $Y\geq CZ$ for some constant $C>0$ that only depends on hydrodynamic bounds $m_0,M_0,E_0,H_0$. In addition, we write $Y\approx Z$ if $Y\gtrsim Z$ and $Z \gtrsim Y$.
\end{nota}

\
In \cite{desvillettes2015entropy}, Desvillettes studied the entropy dissipation for the Landau-Coulomb equation and proved that
\begin{equation} \label{e: Desvillettes }
D(f) +1  \gtrsim \norm{ \grad \sqrt{f}}_{L^2_{-3/2}}^2 +1 \gtrsim \norm{f}_{L^3_{-3}(\R^3)}.
\end{equation}
Note that the second inequality of \eqref{e: Desvillettes } comes from the weighted Sobolev embedding. \\

Our first main result is improving the weight exponent of the last term in \eqref{e: Desvillettes } from $-3$ to $-5/3$, which provides a better asymptotics for large velocities. The precise statement of theorem is as follows.

\begin{thm} \label{thm}
Let $f$ be a solution of \eqref{e: Landau} that satisfies hydrodynamic bounds \eqref{e: hydronamic1},\eqref{e: hydronamic2}, and \eqref{e: hydronamic3}. Then, we have a coercivity estimate
\begin{equation} \label{e: thm}
D(f) \geq c_1 \norm{f}_{L^3_{-5/3}(\R^3)} - 24M_0^2,
\end{equation}
where $c_1>0$ only depends on hydrodynamic bounds $m_0,M_0,E_0,H_0$.
\end{thm}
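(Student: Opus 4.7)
The plan is to sharpen Desvillettes' $-3$ weight by exploiting the anisotropy of the diffusion matrix $\bar a_{ij}(v):=(a_{ij}*f)(v)$. Two of its eigenvalues are of size $\langle v\rangle^{-1}$ (for directions perpendicular to $v$), while only the eigenvalue in the direction $v/|v|$ is as small as $\langle v\rangle^{-3}$. Desvillettes' bound uses the smallest eigenvalue uniformly; retaining the stronger $\langle v\rangle^{-1}$ control in the two angular directions should produce the weighted-average exponent $(2(-1) + 1(-3))/3 = -5/3$.

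First I would derive the classical identity
\[
D(f) \;=\; 4\int \bar a_{ij}(v)\,\partial_i\sqrt{f}\,\partial_j\sqrt{f}\,dv \;+\; 8\pi\int f^2\,dv
\]
by integrating the symmetric double integral defining $D(f)$ by parts in one variable and using the Coulomb-specific identity $\partial_i\partial_j a_{ij} = -8\pi\delta_0$. Dropping the nonnegative $L^2$ term and applying the anisotropic lower bound $\bar a_{ij}(v)\,\xi_i\xi_j \gtrsim \langle v\rangle^{-1}|\xi^\perp|^2 + \langle v\rangle^{-3}|\xi^\parallel|^2$, with constants depending only on the hydrodynamic bounds, and then passing to spherical coordinates $v = r\omega$, one obtains
\[
D(f)\;\gtrsim\; \int_0^\infty \langle r\rangle^{-1}\,A(r)\,dr \;+\; \int_0^\infty \langle r\rangle^{-3}\,r^2\,B(r)\,dr,
\]
where $A(r):= \|\nabla_{S^2}\sqrt{f}(r,\cdot)\|_{L^2(S^2)}^2$ and $B(r):= \|\partial_r\sqrt{f}(r,\cdot)\|_{L^2(S^2)}^2$.

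Next I convert to the $L^3_{-5/3}$ bound via a sphere-wise Sobolev embedding. The two-dimensional Gagliardo--Nirenberg inequality on $S^2$ yields $\int_{S^2} f^3\,d\omega = \|\sqrt{f}(r,\cdot)\|_{L^6(S^2)}^6 \lesssim A(r)^2\,D_f(r) + D_f(r)^3$, where $D_f(r):=\int_{S^2} f(r\omega)\,d\omega$. Integrating against the weight $r^2\langle r\rangle^{-5}$ that appears in $\|f\|_{L^3_{-5/3}}^3$ gives
\[
\|f\|_{L^3_{-5/3}}^3 \;\lesssim\; \int_0^\infty r^2\,\langle r\rangle^{-5}\,A(r)^2\,D_f(r)\,dr \;+\; \int_0^\infty r^2\,\langle r\rangle^{-5}\,D_f(r)^3\,dr.
\]
To close the estimate, I would split the main integrand as $(r^{-1}A)\cdot(r^{-2}A\,D_f)$ for $r\gtrsim 1$ and use H\"older in $r$: the factor $r^{-1}A$ integrates to at most $D(f)$ (the angular Fisher information), while the pointwise bound on $r^{-2}A(r)D_f(r)$ comes from the radial Fisher information via the inequality $|\partial_r\sqrt{D_f(r)}|^2 \leq B(r)$, the mass bound $\int r^2 D_f\,dr \leq M_0$, and a Chebyshev-type argument on $D_f$. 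The lower-order $D_f^3$ term and the contribution from $r\lesssim 1$ would absorb into the constant $20 M_0^2$.

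The most delicate point---and the one I expect to be the main obstacle---is this last step, balancing the super-linear $A^2$ produced by Gagliardo--Nirenberg against the only linearly controlled angular Fisher information. The exponent $5/3$ emerges exactly as the critical balance at which the remaining $A$ factor in $r^{-2}A D_f$ can be absorbed into the radial information with no loss; any strictly smaller $q$ in the weight $L^3_{-q}$ would force super-linear dependence on $A$, which is also why $-5/3$ is optimal.
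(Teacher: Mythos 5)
Your opening step already contains a sign error that breaks the whole strategy. The correct distributional identity for Coulomb potentials is
\begin{equation*}
D(f)\;=\;4\int_{\R^3}\bar a_{ij}(v)\,\partial_i\sqrt f\,\partial_j\sqrt f\;\dd v\;-\;8\pi\int_{\R^3} f^2\,\dd v,
\end{equation*}
not with a plus sign: integrating $\iint a_{ij}(v-w)\partial_i f(v)\partial_j f(w)\dd v\dd w$ by parts twice produces $-\iint \partial_{ij}a_{ij}(v-w)f(v)f(w)\dd v\dd w$, and $\partial_{ij}a_{ij}=\Delta(2/|v|)=-8\pi\delta_0$. So the $L^2$ term is nonpositive, you cannot discard it, and worse, $\int f^2$ is \emph{not} controlled by mass, energy and entropy alone (it can be $+\infty$ under the stated hypotheses). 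This is precisely why the paper does not use $a_{ij}$ directly: it replaces $a_{ij}$ by the cutoff $\overline{a_{ij}}(v)=\eta(|v|)\partial_{ij}|v|$ so that $\partial_{ij}\overline{a_{ij}}(v)=-2\eta'(|v|)/|v|^2\geq -16$, turning the uncontrolled $-8\pi\int f^2$ into a harmless $-16M_0^2$. Without some such regularization, your starting point is not a valid lower bound.

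Even granting a fixed-up starting inequality, the closing step in spherical coordinates does not close. The two-dimensional Gagliardo--Nirenberg inequality on $S^2$ gives $\int_{S^2}f^3\dd\omega\lesssim A(r)^2D_f(r)+D_f(r)^3$, so $A(r)$ enters \emph{quadratically}, whereas the entropy dissipation only controls $\int_0^\infty\langle r\rangle^{-1}A(r)\dd r$ — i.e.\ $A$ in a single $L^1_r$ norm. The factorization $(r^{-1}A)\cdot(r^{-2}AD_f)$ you propose still leaves a factor of $A(r)$ inside the quantity you want to bound in $L^\infty_r$, and neither the radial Fisher information $\int r^2\langle r\rangle^{-3}B(r)\dd r$ nor the mass bound $\int r^2 D_f\dd r\leq M_0$ gives \emph{any} pointwise or $L^2_r$ control on the angular quantity $A(r)$. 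So the Hölder split cannot absorb the extra $A$. What the paper does instead is a genuinely three-dimensional local estimate: after rescaling an ellipsoid $F_{v_0}$ (radii $1,1,1/N$ aligned with $v_0$) to the unit ball, the anisotropic eigenvalue bound makes the quadratic form uniformly elliptic, and the 3D Sobolev embedding $H^1(B_1)\hookrightarrow L^6(B_1)$ gives $\|s\|_{L^6(F_0)}^2\lesssim N^{5/3}\bigl(\int_{F_0}\widetilde a\,\nabla s\cdot\nabla s+\tfrac{M_0}{N}\int_{F_0}s^2\bigr)$, which is \emph{linear} in the Dirichlet-type energy; a Vitali covering by such ellipsoids and concavity of $x\mapsto x^{1/3}$ then sums up these local bounds into $\|f\|_{L^3_{-5/3}}$. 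That linearity is exactly what your spherical/GNS route loses.
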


A simplified proof for the first inequality of \eqref{e: Desvillettes } was provided in \cite[Proposition 2.2]{golse2022partialarxiv}. The lower bound for eigenvalues of the diffusion matrix $A[f]$, which is
\begin{equation} \label{e: low diffusion}
    A[f] (v) := \int_{\R^3} a_{ij}(v-w)f(w) \dd w  \gtrsim \frac{1}{\Jap{v}^3} I,
\end{equation}
plays a crucial role in this proof. We note that our proof of Theorem \ref{thm} is related to this simplified proof.  However, in both the proof of \cite{golse2022partialarxiv} and the proof of Theorem \ref{thm}, the cutoff version of $A[f]$ is used to overcome the singularity of $a_{ij}$ at the origin. The exact definition of the cutoff matrix, which will be called $\widetilde{A}[f]$, is introduced in section 2, but we mention that $\widetilde{A}[f]$ also satisfies the same lower bound as in \eqref{e: low diffusion}.\\

Furthermore, the second main result of this paper is proving the weighted Lebesgue space $L^3_{-5/3}$ is optimal in the following sense.

\begin{thm} \label{thm2}
    Let $p\geq 1$ and $q\in \R$. If there exists a constant $c_2>0$, only depending on hydrodynamic bounds, such that
    \[
    D(h) + 1 \geq c_2 \norm{h}_{L^p_{-q}(\R^3)},
    \]
    for every solution $h$ of \eqref{e: Landau} that satisfies hydrodynamic bounds \eqref{e: hydronamic1},\eqref{e: hydronamic2}, and \eqref{e: hydronamic3}, then $p\leq 3$. Moreover, if $p=3$, then $q\geq 5/3$.
\end{thm}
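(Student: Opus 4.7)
The plan is to construct, for each $(p,q)$ outside the admissible range, a family of test functions $h_n$ satisfying \eqref{e: hydronamic1}--\eqref{e: hydronamic3} uniformly in $n$ for which the ratio $\|h_n\|_{L^p_{-q}(\R^3)}/(D(h_n)+1)$ is unbounded. Since solutions of \eqref{e: Landau} exist for a wide class of nonnegative initial data and one may test the inequality at $t=0$, this contradicts $D(h)+1\ge c_2\|h\|_{L^p_{-q}}$ for every $c_2>0$.

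\medskip

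For the necessity of $p\le 3$, I would take $h_n(v)=M(v)+\lambda_n\phi(\lambda_n v)$, where $M$ is a fixed Maxwellian meeting the hydrodynamic bounds strictly and $\phi\in C_c^\infty(\R^3)$ is nonnegative. The perturbation $b_n(v):=\lambda_n\phi(\lambda_n v)$ has mass $\sim\lambda_n^{-2}$, energy $\sim\lambda_n^{-4}$, and vanishing entropy contribution, so the hydrodynamic bounds survive for large $\lambda_n$. A standard rescaling shows $D_{\mathrm{self}}(b_n)\sim\lambda_n^{-1}\to 0$, while the cross interaction between $b_n$ and $M$ is $O(1)$: the $\lambda_n^2$ growth of $|\nabla\log b_n|^2$ is absorbed by the mass factor $\int b_n\sim\lambda_n^{-2}$ and the Coulomb kernel $a_{ij}$ at separation of order $1$ is uniformly bounded. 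A change of variables (with $\phi$ supported where $\langle v\rangle\approx 1$) gives $\|b_n\|_{L^p_{-q}}^p\sim\lambda_n^{p-3}\|\phi\|_{L^p}^p$, which diverges precisely when $p>3$.

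\medskip

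For the sharpness of $q=5/3$ when $p=3$, the bump above has bounded $L^3$ norm and does not suffice, so the construction must push mass to high velocities $|v_n|=V_n\to\infty$ and exploit the anisotropy of $a_{ij}(v-w)=|v-w|^{-1}(\delta_{ij}-(v-w)_i(v-w)_j/|v-w|^2)$. My plan is to take $h_n=M+b_n$ with $b_n$ a radial (or nearly radial) perturbation concentrated near the sphere $|v|=V_n$, so that $\nabla\log b_n(v)$ points along $v/|v|$ and is nearly parallel to $v-w$ for $w$ in the core of $M$. The Coulomb projection then annihilates the leading contribution to the integrand of $D(h_n)$, and only the subleading terms coming from the $|v|$-mismatch within the shell and from $\nabla\log M(w)$ survive. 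Tuning the mass $m_n\sim V_n^{-2}$ (forced by the energy constraint) together with the shell thickness $r_n$ against the self-dissipation and cross estimates should keep $D(h_n)$ bounded while $\|b_n\|_{L^3_{-q}}$ diverges at the sharp rate $V_n^{5/3-q}$, giving the obstruction for every $q<5/3$.

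\medskip

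The hard part is this second step. A naive isotropic bump or Gaussian at velocity $v_n$ yields only the weaker threshold $q<1$: the cross term with $M$ contributes $\sim m_n V_n^{-1}/\mathrm{(width)}^2$, which precisely matches $\|b_n\|_{L^3_{-q}}\sim m_n V_n^{-q}/\mathrm{(width)}^2$ up to the factor $V_n^{1-q}$. Recovering the sharp exponent $5/3$ requires a perturbation whose dominant gradient is aligned with $v-w$ so that the Coulomb projection kills the principal piece of the integrand, and then a careful expansion of the residual $|q_\perp|^2$ coming from the radial variation within the support of $b_n$; the scaling will need to be cross-checked against the weighted Sobolev inequality that underlies Theorem \ref{thm} so that the competitor actually saturates the same trade-off between weighted Fisher information and weighted $L^3$ norm. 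Identifying this extremizing family and closing the constants is the technical core of the argument.
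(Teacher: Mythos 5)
Your proposal splits the theorem into two parts and proposes two different test families; the paper uses one unified construction for both.

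For the first part ($p\le 3$), your rescaled isotropic bump at the origin, $h_n=M+\lambda_n\phi(\lambda_n v)$, is a workable alternative to the paper's approach (the paper instead sends $B\to\infty$ with $N$ fixed in its high-velocity anisotropic bump). Your scalings are correct: mass $\sim\lambda_n^{-2}$, energy $\sim\lambda_n^{-4}$, entropy $\to 0$, $L^p$ norm $\sim\lambda_n^{1-3/p}$, self-dissipation $\sim\lambda_n^{-1}$, and the cross interaction with $M$ is indeed $O(1)$ by the cancellation $\int b_n\,|\nabla\log b_n|^2\sim\lambda_n^{-2}\cdot\lambda_n^2$. One detail you should tidy: with $h=M+b$, $\log h$ is not the sum $\log M+\log b$, and disentangling the dissipation at $t=0$ is messy; the paper uses $h=\max(f,g)$ and decomposes the integral over $E_f\times E_g$, $E_g\times E_g$, which avoids this cleanly. (You correctly note this is only needed at $t=0$, since the hypotheses are pointwise in $(t,x)$.)

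For the second part ($q\ge 5/3$ when $p=3$), there is a genuine gap, and you honestly flag it (``identifying this extremizing family and closing the constants is the technical core''). Worse, the family you propose --- a thin spherical shell $V_n-r_n\le|v|\le V_n+r_n$ --- does not saturate the inequality and would in fact give a \emph{weaker} threshold than your own isotropic bump. The reason is that a shell spreads its mass over a set of volume $\sim V_n^2 r_n$, forcing the density (hence the $L^3$ norm) to be small for a given mass, while the cross-dissipation $I_2\sim m\,r_n^{-2}V_n^{-3}$ only gains the anisotropy factor $V_n^{-2}$ over the isotropic estimate $m\,r_n^{-2}V_n^{-1}$. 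Optimizing $r_n$ at $D\lesssim 1$ one finds $\|b_n\|_{L^3_{-q}}\lesssim V_n^{-5/3-q}$ for the shell versus $\sim V_n^{1-q}$ for the isotropic bump and $\sim V_n^{5/3-q}$ for the paper's competitor. Thus the shell loses by $V_n^{8/3}$ to the round bump: spreading mass over the whole sphere costs far more in $L^3$ norm than aligning the gradient gains in dissipation.

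The paper's construction is instead a \emph{localized} anisotropic bump at a single point $Ne_1$: $g(v)=cB^3N\,\varphi(BN(v_1-N),Bv_2,Bv_3)$ with $c=1/(BN^5)$. Two features are essential and both are missing from the shell. First, localization: the support is an ellipsoid of radii $1/(BN),1/B,1/B$, so the mass sits in a set of volume $\sim 1/(B^3N)$ rather than $\sim V^2 r$; this is what makes $\|g\|_{L^3}$ large for small mass and also keeps the self-term $I_3\lesssim B/N^7$ subdominant. Second, the aspect ratio $N:1:1$ is calibrated to the eigenvalue anisotropy $\mathrm{diag}(N^{-3},N^{-1},N^{-1})$ of $\int a(v-w)M(w)\,dw$ near $|v|=N$ (Lemma \ref{lem: 4}), so the amplified radial gradient $\sim BN$ is exactly absorbed by the $N^{-3}$ radial eigenvalue while the transverse gradient $\sim B$ meets the $N^{-1}$ eigenvalue; both contributions to $I_2$ balance at $\sim B/N^6$. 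This is the same two-parameter family used for $p\le 3$; sending $B=N^7$, $N\to\infty$ gives $D(h)+1\lesssim N$ versus $\|h\|_{L^3_{-q}}\gtrsim N^{8/3-q}$, yielding $q\ge 5/3$.
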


The weighted Lebesgue norm $L^3_{-5/3}$ that we obtained in Theorem \ref{thm} was predicted in the recent paper \cite{chaker2022entropy} of Chaker and Silvestre. In the paper, they study an entropy dissipation estimate for the Boltzmann equation and suggest that the weight exponent $-3$ in \cite{desvillettes2015entropy} may not be optimal.  Recall that the Boltzmann equation is a kinetic equation with an integro-differential collision operator. The Landau equation, which is the focus of our paper, emerges as a grazing collision limit of the Boltzmann equation. This procedure is similar to recovering the Laplacian operator $-\Delta$ from the fractional Laplacian $(-\Delta)^s$ by sending $s\to 1$; See e.g. \cite{alexandre2002boltzmann, alexandre2004landau, desvillettes1992grazing, goudon1997fokker, villani1998new}.

However, it is worth pointing out that the result of \cite{chaker2022entropy} does not directly imply Theorem \ref{thm}. The proof in \cite{chaker2022entropy} takes advantage of certain non-local features of the Boltzmann collision operator. As a result, the coefficient in the coercivity estimate found in \cite{chaker2022entropy} would degenerate in the grazing collision limit, even if the proper normalization is taken into account.

The optimality of the $L^3$ norm might seem natural because, if we ignore the weight, the Sobolev embedding tells us $\grad \sqrt{f} \in L^2$ implies $\sqrt{f} \in L^6$. However, it is not obvious that the current proof of \eqref{e: Desvillettes } gives the optimal result since it only uses the diffusion matrix $A[f]$ is bounded from below by the identity matrix, again ignoring the weight. It has been recently studied by Cabrera, Gualdani, and Guillen in \cite{cabrera2023diffusion} that the nonlinear diffusion term of the Landau-Coulomb equation has a stronger regularization effect than the Laplace operator. Thus, as soon as we replace $A[f]$ with the weighted identity matrix by applying the inequality \eqref{e: low diffusion}, we are ignoring the difference in regularizing effects between two diffusion operators $A[f]$ and $\frac{1}{\Jap{v}^3} I $. Therefore, there is a possibility that a better estimate for the entropy dissipation exists. 
That is why it is worth to speculate the optimality of the $L^3$ norm.

The optimality of the weight is slightly trickier. At the inequality \eqref{e: Desvillettes }, it seems like the weight exponent $-3$ is coming from the weighted Sobolev embedding, which obviously cannot be improved. Desvillettes also mentions in \cite{desvillettes2015entropy} that the weight $-3$ seems to be the optimal weight. However, the improvement of the weight exponent actually comes from a better analysis of the first inequality of \eqref{e: Desvillettes }, not the second inequality of \eqref{e: Desvillettes }. 
% Moreover, it can be checked that the scalar weight $\frac{1}{\Jap{v}^3}$ on the right hand side of 
%  \[
%  A[f](v)=(a_{ij} \ast f) (v) \gtrsim \frac{1}{\Jap{v}^3} I
%  \]
% cannot be improved. 
Recall that the lower bound of eigenvalues for $A[f]$ is crucial in proving the first inequality of \eqref{e: Desvillettes }.
In \cite{silvestre2017upper}, Silvestre proved an improved version of  the inequality \eqref{e: low diffusion}, which is
\begin{equation} \label{in:ee}
A[f](v)= (a_{ij}\ast f) (v) \gtrsim \frac {1}{\Jap{v}^3} \left(\frac{v}{|v|}\right)^{\otimes 2}+ \frac 1 {\langle v \rangle} \left(I- \left(\frac{v}{|v|}\right)^{\otimes 2}\right),
\end{equation}
by taking an anisotropic feature of $a_{ij}(v)$ into account. Applying an improved inequality \eqref{in:ee} result in the improvement of the weight. The matrix on the right hand side of \eqref{in:ee} has a determinant $1/\Jap{v}^5$, which suggests that the weight in Theorem \ref{thm} is $1/\Jap{v}^5$. We remark that the inequality \eqref{in:ee} also works for the cutoff matrix $\widetilde{A}[f]$, which is Lemma \ref{lem: eigenvalue}.

\section{A priori estimate for the entropy dissipation}
In this section, we prove Theorem \ref{thm}. The key idea is to analyze an anisotropy of the diffusion matrix by applying an appropriate change of coordinates. We first start by extracting the main coercivity term and an error term from the entropy dissipation formula \eqref{e: entropy dissipation}.

We remove the singularity of $a_{ij}$ at the origin by considering the cutoff matrix $\widetilde{a}$ defined by
\begin{equation} \label{e: def diffusion}
\widetilde{a}_{ij} (v) :=  \eta(|v|) \frac{1}{|v|}\left( \delta_{ij}-\frac{v_iv_j}{|v|^2}\right)=\eta(|v|)\partial_{ij}(|v|).
\end{equation}
The cutoff function $\eta(x) \in C^\infty(\R)$ equal to $1$ in $[1,\infty)$, equal to $|x|^3$ in $[0,1/2]$, and $0\leq \eta' \leq 2.$ Then, 
\[
\partial_i \widetilde{a}_{ij}(v)=2\eta(|v|)\partial_j\left(\frac{1}{|v|} \right) \Rightarrow \partial_{ij}\widetilde{a}_{ij}(v)=-2\eta'(|v|) \frac{1}{|v|^2}\geq -16.
\]
Since $a_{ij} \geq \widetilde{a}_{ij}$, we have
\begin{align} \label{e: expand}
    D(f) &\geq \frac 1 2\iint_{\R^3\times \R^3}  \widetilde{a}_{ij}(v-w)  f(v)f(w) \left(\frac{\partial_i f(v)}{f(v)} -\frac{\partial_i f(w)}{f(w)} \right)\left(\frac{\partial_j f(v)}{f(v)} -\frac{\partial_j f(w)}{f(w)} \right) \dd v \dd w.
\end{align}
Let $\widetilde{A}[f](v):=(\widetilde{a}_{ij}\ast f)(v)$ and we occasionally write $\widetilde{A}$ instead of $\widetilde{A}[f]$ for simplicity. We expand the right hand side of \eqref{e: expand} and use the symmetry of $\widetilde{a}_{ij}$ to get
\begin{align} \label{e: entropy bound}
    D(f)& \geq \int_{\R^3} \widetilde{A}_{ij} (v) \frac{\partial_i f(v)\partial_j f(v)}{f(v)} \dd v -\iint_{\R^3\times \R^3}  \widetilde{a}_{ij}(v-w) \partial_i f(v) \partial_j f(w) \dd v \dd w\\
    \label{e: error}
    & = \int_{\R^3} \widetilde{A}_{ij} (v) \frac{\partial_i f(v)\partial_j f(v)}{f(v)} \dd v + \iint_{\R^3\times \R^3} 
    \partial_{ij}\widetilde{a}_{ij}(v-w)  f(v) f(w) \dd v \dd w.
\end{align}
We integrated by parts twice for the second term of \eqref{e: entropy bound}.
We obtain a bound for the second term of \eqref{e: error},
\begin{align*}
    \iint_{\R^3\times \R^3} 
    \partial_{ij}\widetilde{a}_{ij}(v-w)  f(v) f(w) \dd v \dd w\geq -16M_0^2.
\end{align*}
Therefore,
\begin{equation} \label{e: main}
    D(f)\geq \int_{\R^3} \widetilde{A}_{ij} (v) \frac{\partial_i f(v) \partial_j f(v)}{f(v)} \dd v -16M_0^2,
\end{equation}
and it remains to prove the lower bound for the first term in \eqref{e: error}. Let's state it as a proposition.
\begin{prop} \label{lem: bound}
Let $s(v):=2\sqrt{f(v)}$. Then
\begin{equation}
 \int_{\R^3} \widetilde{A}_{ij} (v)  \frac{\partial_i f(v)\partial_j f(v)}{f(v)} \dd v  +8M_0^2=\int_{\R^3}  \widetilde{A}_{ij}(v) \partial_i s(v) \partial_j s(v)  \dd v +8M_0^2 \gtrsim  \norm{f}_{L^3_{-5/3}(\R^3)} =\frac 1 4 \norm{s}_{L^6_{-5/6}(\R^3)}^2.
\end{equation}
\end{prop}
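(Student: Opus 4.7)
The first equality of the proposition is immediate: with $s=2\sqrt f$, one has $\partial_i s = \partial_i f /\sqrt f$, so $\partial_i s\,\partial_j s = (\partial_i f\,\partial_j f)/f$. The real content is the inequality $\int \widetilde{a_{ij}}\partial_i s\,\partial_j s\,\dd v + 4M_0^2 \gtrsim \norm{s}_{L^6_{-5/6}(\R^3)}^2$. The plan exploits the anisotropy of $\widetilde a(v)$: for large $|v|$, its eigenvalue along $v/|v|$ is of order $\Jap{v}^{-3}$, whereas its two perpendicular eigenvalues are of order $\Jap{v}^{-1}$. Desvillettes' bound $\norm{f}_{L^3_{-3}}$ used only the minimum eigenvalue isotropically; retaining the larger perpendicular eigenvalues is what improves the weight to $-5/3$.

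First I would establish the two eigenvalue lower bounds. From the expansion
\[
\widetilde{a_{ij}}(v)\xi_i\xi_j = \int_{\R^3} \frac{\eta(|v-w|)}{|v-w|}\Bigl|\xi - \frac{\xi\cdot(v-w)}{|v-w|^2}(v-w)\Bigr|^2 f(w)\,\dd w,
\]
choosing $\xi\perp v/|v|$ and using that the lower mass $m_0$ together with the energy bound $E_0$ confines a positive fraction of $f$ to a fixed ball yields $\lambda_\perp(v)\gtrsim \Jap{v}^{-1}$. For $\xi = v/|v|$ the identity $|\xi|^2 - (\xi\cdot(v-w))^2/|v-w|^2 = |w_\perp|^2/|v-w|^2$ reduces matters to a uniform lower bound on $\int |w_\perp|^2 f(w)\,\dd w$ along every axis through the origin; here the entropy bound $H_0$ enters, preventing $f$ from concentrating on a line, and gives $\lambda_\parallel(v)\gtrsim \Jap{v}^{-3}$. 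Both estimates are of a classical type (cf.\ \cite{desvillettes2015entropy}). Applying them with $\xi = \nabla s$ and integrating,
\[
\int \widetilde{a_{ij}}\partial_i s\,\partial_j s\,\dd v \gtrsim \mathcal E := \int \Jap{v}^{-1}|\nabla_\perp s|^2\,\dd v + \int \Jap{v}^{-3}|\partial_r s|^2\,\dd v,
\]
where $\partial_r, \nabla_\perp$ denote the radial and spherical parts of $\nabla s$.

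Next I would convert $\mathcal E$ into $\norm{s}_{L^6_{-5/6}}^2$ via a dyadic decomposition. For $R = 2^k$, $k\geq 0$, set $A_R = \set{R\leq |v|\leq 2R}$; the inner ball $\set{|v|\leq 1}$ is handled by standard Sobolev using that $\widetilde a \gtrsim I$ there (immediate from the eigenvalue bounds). In spherical coordinates $v = r\omega$, a direct computation gives
\[
\int_{A_R}\!\! s^6\Jap{v}^{-5}\dd v\sim R^{-3}\!\!\int_R^{2R}\!\!\int_{S^2}\!\! s^6\,\dd r\,\dd\omega, \qquad \mathcal E\big|_{A_R} \sim R^{-1}\!\!\int_R^{2R}\!\!\int_{S^2}\!\bigl[(\partial_r s)^2+|\nabla_\omega s|^2\bigr]\dd r\,\dd\omega.
\]
Treating $[R,2R]\times S^2$ as a $3$-dimensional Riemannian product manifold (of bounded geometry uniformly in $R\geq 1$), the Sobolev embedding $H^1\hookrightarrow L^6$ applied after an $r$-cutoff yields
\[
\Bigl(\int_{A_R}\!\! s^6\Jap{v}^{-5}\dd v\Bigr)^{1/3} \lesssim \mathcal E\big|_{A_R} + R^{-3}M_0,
\]
where the lower-order term arises from the $L^2$ contribution in Sobolev, controlled via $\int s^2\,\dd v\leq 4M_0$.

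Finally I would cube and sum in $k$, using $\sum_k a_k^3\leq (\sum_k a_k)^3$ for $a_k\geq 0$ and $\sum_k 2^{-9k}<\infty$:
\[
\int s^6\Jap{v}^{-5}\dd v \lesssim \mathcal E^3 + M_0^3 \implies \norm{s}_{L^6_{-5/6}}^2 \lesssim \mathcal E + M_0,
\]
which closes the argument when combined with the first step. The main obstacle is the eigenvalue estimate, particularly the non-concentration argument via entropy that produces $\lambda_\parallel\gtrsim \Jap{v}^{-3}$ uniformly in the direction of $v$. The exponent $-5/3$ is then selected sharply by the scaling identity $\norm{s}_{L^6_{-5/6}}^2\big|_{A_R}\sim R^{-5/3}\norm{s}_{L^6(A_R)}^2 \lesssim R^{-5/3}\cdot R^{5/3}\,\mathcal E\big|_{A_R} = \mathcal E\big|_{A_R}$, where $R^{5/3}$ is the anisotropic Sobolev constant $(\alpha\sqrt\beta)^{-2/3}$ corresponding to $\alpha\sim R^{-1}$, $\beta\sim R^{-3}$.
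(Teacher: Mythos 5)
Your proposal is correct and it takes a genuinely different route at the decomposition stage. Both arguments hinge on the same anisotropic eigenvalue bound $\widetilde{a}(v)\gtrsim \Jap{v}^{-3}\,(\hat v)^{\otimes 2} + \Jap{v}^{-1}\,(\mathrm{I}-(\hat v)^{\otimes 2})$ (in the paper this is Lemma~\ref{lem: eigenvalue}, proved following Silvestre's argument in \cite{silvestre2017upper} rather than \cite{desvillettes2015entropy}; note also that the paper's proof uses the concentration Lemma~\ref{lem: concentration} — and hence all three hydrodynamic bounds — for \emph{both} the parallel and perpendicular directions, not just the parallel one). Where the proofs diverge is in how the anisotropic quadratic form is turned into an $L^6$ quantity. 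The paper partitions $\R^3$ into unit-width annuli $A_N=\set{N-1<|v|\le N}$, covers each by small ellipsoids of aspect ratio $1:1:1/N$ aligned with $v$, rescales each ellipsoid to a unit ball via the affine map $T_0$ (which renders $\widetilde a$ comparable to $N^2/(N+1)^3\,\mathrm{I}$), applies $H^1\hookrightarrow L^6$ on $B_1$, and then uses a Vitali-type bounded-overlap covering lemma. You instead work in dyadic annuli in spherical coordinates $v=r\omega$, observing that the natural metric splitting $|\nabla s|^2=|\partial_r s|^2+r^{-2}|\nabla_\omega s|^2$ already re-weights the angular part by $r^{-2}$ and hence turns the anisotropic form into $\approx R^{-3}\bigl(|\partial_r s|^2+|\nabla_\omega s|^2\bigr)$ on $\set{|v|\approx R}$, an "automatic isotropization" that replaces the ellipsoid rescaling. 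This is conceptually cleaner, and both approaches close with the same $\ell^{1/3}$-convexity trick $\sum a_k^{1/3}\ge(\sum a_k)^{1/3}$ applied to the annular $L^6$ pieces before summing. The one point you should make explicit is the uniformity in $R$ of the Sobolev embedding $H^1\hookrightarrow L^6$ on the product cylinders $[R,2R]\times S^2$: these are not of bounded diameter, so this is not the usual bounded-geometry statement but requires its own covering-by-unit-cylinders argument together with $\ell^3\subset\ell^1$ — essentially the same bookkeeping that the paper packages into its Covering Lemma~\ref{lem: covering}, so the two proofs end up being of comparable length once that step is spelled out.
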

It is clear that it suffices to show Proposition \ref{lem: bound} to prove Theorem \ref{thm}. Note that the hydrodynamic bound \eqref{e: hydronamic1} on mass become
\begin{equation} \label{e: new mass}
    0<4 m_0 \leq \int_{\R^3} s(v)^2 \dd v \leq 4M_0.
\end{equation}

\begin{remark}
    We briefly explain why the cutoff is necessitated. If we do not remove the singularity of $a_{ij}$, then a simple computation show that $\partial_{ij} a_{ij}$ is equal to a negative constant multiple of $\delta_0$, so the second term in \eqref{e: error} becomes the $L^2$ norm of $f$. It is obvious that the $L^2$ norm of $f$ is not controlled by hydrodynamic quantities.
\end{remark}
A simplified proof of \eqref{e: Desvillettes } provided in \cite{golse2022partialarxiv} essentially comes from combining \eqref{e: main} with the cutoff version of inequality \eqref{e: low diffusion}, that is,
\begin{equation} \label{e: lower diffusion}
    \widetilde{A}[f](v) =(\widetilde{a}_{ij}\ast f) (v) \gtrsim \frac{1}{\Jap{v}^3} I.
\end{equation}

As mentioned in the last paragraph of the introduction, the lower bound \eqref{e: lower diffusion} for eigenvalues of $\widetilde{A}[f]$ can be improved by taking into account that $\widetilde{A}[f]$ is anisotropic. For tangential directions, a lower bound of eigenvalues has a decay rate of $1/\Jap{v}$, not $1/\Jap{v}^3$, as in \eqref{in:ee}. A corresponding statement is given below as Lemma \ref{lem: eigenvalue}. We mostly follow the proof in \cite[Lemma 3.2]{silvestre2017upper}, though a slight modification is needed because we are considering the cutoff matrix $\widetilde{a}_{ij}$, not the original matrix $a_{ij}$.

\begin{lemma} \label{lem: eigenvalue}
The cufoff diffusion matrix $\widetilde{A}[f]$ satisfies the following lower bound  for all non-zero $v\in\R^3$:
\begin{equation} \label{e: eigenvalue}
\widetilde{A}[f] (v) \geq c_0 \left( \frac {1}{\Jap{v}^3} \left(\frac{v}{|v|}\right)^{\otimes 2}+ \frac 1 {\langle v \rangle} \left(I- \left(\frac{v}{|v|}\right)^{\otimes 2}\right) \right)
\end{equation}
for some $c_0=c_0(m_0,M_0,E_0, H_0)>0$.
\end{lemma}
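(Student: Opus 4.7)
The matrix inequality \eqref{e: eigenvalue} is equivalent, since its right-hand side is diagonal in the orthogonal splitting $\R^3 = \R e \oplus e^\perp$ with $e = v/|v|$, to the scalar bound
\[
\widetilde{a}(v)\, \xi \cdot \xi \geq c_0 \left( \frac{\alpha^2}{\Jap{v}^3} + \frac{|\vec\beta|^2}{\Jap{v}} \right)
\]
for every unit vector $\xi = \alpha e + \vec \beta$ with $\vec \beta \in e^\perp$ and every $v \neq 0$. My approach rests on a sum-of-squares decomposition of the integrand. Using the identity $|v-w|^2 - ((v-w)\cdot \xi)^2 = |(v-w) \times \xi|^2$ and expanding the cross product in the orthonormal basis $\{e, \hat \beta, \hat \beta^\perp\}$ with $\hat \beta = \vec \beta/|\vec \beta|$, a direct computation gives
\[
\widetilde{a}(v)\, \xi \cdot \xi = \int_{\R^3} \frac{\eta(|v-w|)}{|v-w|^3} \Bigl[\bigl(L|\vec \beta| + \alpha X\bigr)^2 + Y^2\Bigr] f(w) \dd w,
\]
where $L = |v| - w \cdot e$, $X = w \cdot \hat \beta$, and $Y = w \cdot \hat \beta^\perp$. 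This two-square form is what makes the anisotropy manageable.

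I then invoke the standard ``good set'' consequence of \eqref{e: hydronamic1}--\eqref{e: hydronamic3} (Chebyshev together with a truncation by the entropy bound): there exist constants $R, \delta, \mu > 0$ depending only on $m_0, M_0, E_0, H_0$ and a set $G \subset B_R$ with $|G| \geq \mu$ and $f \geq \delta$ on $G$. Shrinking $G$ by discarding a thin slab, I may also arrange that there is $r_0 > 0$ with $|G \cap \{|w \cdot u| \geq r_0\}| \geq \mu/2$ for every unit $u \in \R^3$; in particular, $\int_G Y^2 \dd w \geq r_0^2 \mu/2$ uniformly in $\hat \beta^\perp$.

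For $|v|$ below a threshold depending only on the hydrodynamic data, \eqref{e: eigenvalue} follows from the well-known uniform bound $\widetilde{a}(v) \geq c I$ (a short consequence of the good set $G$ and the non-degeneracy of $\overline{a}$), since the right-hand side of \eqref{e: eigenvalue} is comparable to $I$ in this range. For $|v|$ large enough that $|v-w| \in [|v|/2, 2|v|]$ and $\eta(|v-w|) = 1$ on $B_R$, the prefactor satisfies $\eta/|v-w|^3 \gtrsim 1/|v|^3$, so
\[
\widetilde{a}(v)\, \xi \cdot \xi \gtrsim \frac{\delta}{|v|^3} \int_G \bigl[(L|\vec \beta| + \alpha X)^2 + Y^2\bigr] \dd w.
\]
I split on $|v||\vec \beta|$. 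If $|v||\vec \beta| \leq 4R$, then $|v|^2|\vec \beta|^2 = O(1)$ and the $Y^2$ piece alone produces a lower bound of order $1$, dominating both $\alpha^2 \leq 1$ and $|v|^2|\vec \beta|^2$. If $|v||\vec \beta| > 4R$, then on $G$ one has $|L|\vec \beta| + \alpha X| \geq L|\vec \beta| - R \geq |v||\vec \beta|/4$, so the first square contributes $\gtrsim |v|^2|\vec \beta|^2$. In either case the integral is $\gtrsim \alpha^2 + |v|^2|\vec \beta|^2$, and dividing by $|v|^3$ gives exactly $\alpha^2/\Jap{v}^3 + |\vec \beta|^2/\Jap{v}$.

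The main technical obstacle is identifying the correct sum-of-squares decomposition. A naive strategy that bounds $\widetilde{a}(v) e \cdot e$ and $\widetilde{a}(v) \tau \cdot \tau$ separately and then tries to close with Schur complements does not succeed, because the off-diagonal entries of $\widetilde{a}$ in the $\{e, \hat \beta\}$ basis are not small enough relative to the diagonal lower bounds to force the full matrix inequality. The cross-product identity collapses the anisotropy into two transparent squares, after which the case split on $|v||\vec \beta|$ handles the degeneration of each square in the opposite regime. The only modification over \cite[Lemma 3.2]{silvestre2017upper} is bookkeeping for $\eta$, which is trivial since $\eta \equiv 1$ for $|v-w| \geq 1$ and so plays no role once $|v|$ is large.
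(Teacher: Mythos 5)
Your proposal is correct, and it takes a genuinely different route from the paper's. The paper's proof evaluates the quadratic form $\langle\widetilde{a}(v)e,e\rangle = \int \frac{\eta(|w|)}{|w|}\bigl(1-(w\cdot e/|w|)^2\bigr)f(v-w)\,dw$ and then, for a good set $G\subset B_R$ on which $f\geq l$, estimates the measure of the ``bad'' cone $\{w\in B_R(v): 1-(w\cdot e/|w|)^2<\eps\}$ and chooses $\eps$ depending on the angle between $e$ and $v$; it carries this out explicitly only in the two extreme cases $e\perp v$ (taking $\eps$ of order $1$) and $e\parallel v$ (taking $\eps\approx \mu R^{-1}|v|^{-2}$). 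You instead expand $\overline{a}(v-w)\xi\cdot\xi$ via the cross-product identity $|v-w|^2-((v-w)\cdot\xi)^2=|(v-w)\times\xi|^2$ and obtain the two-square decomposition $(L|\vec\beta|+\alpha X)^2+Y^2$ in the frame $\{e,\hat\beta,\hat\beta^\perp\}$, which handles every direction $\xi=\alpha e+\vec\beta$ simultaneously by a scalar dichotomy on $|v||\vec\beta|$. (I verified the identity: with $v-w=Le-X\hat\beta-Y\hat\beta^\perp$ and $\xi=\alpha e+|\vec\beta|\hat\beta$, one indeed gets $|(v-w)\times\xi|^2=Y^2+(L|\vec\beta|+X\alpha)^2$ for $|\xi|=1$.) What your route buys is that the full matrix inequality comes out directly: as you note, a priori knowing only the bounds on the two extremal quadratic forms of a PSD matrix does \emph{not} force domination of a diagonal matrix, so the paper's ``it suffices to show the two cases'' phrasing glosses over an intermediate-angle case — the cone-measure argument does extend (for $e$ at angle $\theta$ from $v$ one takes $\eps\approx(\theta-R/|v|)_+^2$ and recovers the interpolated rate), but your decomposition makes that interpolation automatic. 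One small clean-up: you do not need to shrink $G$; for any fixed $r_0<\mu/(4\pi R^2)$ the slab $\{|w\cdot u|<r_0\}\cap B_R$ already has measure $<\mu/2$, giving $\int_G Y^2\,dw\geq r_0^2\mu/2$ uniformly in the direction $\hat\beta^\perp$ without modifying $G$. Finally, your small-$|v|$ case invokes $\widetilde a\geq cI$ somewhat loosely; the paper obtains this from its \eqref{e: anistropy}, and your good set gives the same conclusion, so the claim is fine but deserves a line of justification.
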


\noindent
To prove Lemma \ref{lem: eigenvalue}, we need the following auxiliary lemma. This lemma was proven in \cite[Lemma 4.6]{silvestre2016new} or \cite[Lemma 3.3]{silvestre2017upper}, but we provide a proof for reader's convenience.
\begin{lemma} \label{lem: concentration}
    There exist positive numbers $R,l,$ and $\mu$, only depending on hydrodynamic bounds $m_0,M_0,E_0,H_0$, such that
    \begin{equation*}
    \{ v\in B_R : f(v) \geq l \} \geq \mu.
    \end{equation*}
\end{lemma}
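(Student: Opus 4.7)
The plan is a standard concentration argument that uses each of the three hydrodynamic bounds in turn. First, I would use Chebyshev's inequality with the energy bound \eqref{e: hydronamic2} to confine a definite fraction of the mass to a large ball: since $\int_{\{|v|>R\}} f\, \dd v \leq E_0/R^2$, choosing $R := \sqrt{2E_0/m_0}$ yields $\int_{B_R} f(v)\, \dd v \geq m_0/2$. This pins down $R$ in terms of the hydrodynamic bounds.

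The main step, and what I anticipate as the chief obstacle, is to use the entropy bound \eqref{e: hydronamic3} to preclude mass from concentrating on a set where $f$ is very large. The subtle point is that \eqref{e: hydronamic3} controls only the signed entropy, whereas I need an upper bound on $\int f (\log f)_+\, \dd v$. I would handle this via a standard Gaussian splitting of the negative part: on $\{e^{-|v|^2}\leq f<1\}$ one has $\log(1/f)\leq |v|^2$, so the contribution is bounded by $E_0$; on $\{f<e^{-|v|^2}\}$ the elementary inequality $f\log(1/f)\leq 2\sqrt{f}$ gives a contribution bounded by $2\int e^{-|v|^2/2}\, \dd v$, a universal constant. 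Adding these to $H_0$ yields $\int f (\log f)_+\, \dd v \leq H_0 + E_0 + C$.

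With this bound in hand, Markov's inequality in the form $\int_{\{f>K\}} f\, \dd v \leq (\log K)^{-1}\int f(\log f)_+\, \dd v$ lets me choose $K>1$, depending only on $m_0, E_0, H_0$, so that $\int_{\{f>K\}} f\, \dd v \leq m_0/8$. Then I pick $l>0$ small enough (depending on $R$ and $m_0$) that $l\,|B_R|\leq m_0/8$, which forces $\int_{B_R\cap\{f<l\}} f\, \dd v \leq m_0/8$. Subtracting the two tail contributions from the mass captured in $B_R$ gives
\[
\int_{B_R \cap \{l\leq f\leq K\}} f(v)\, \dd v \;\geq\; \frac{m_0}{2}-\frac{m_0}{8}-\frac{m_0}{8}\;=\;\frac{m_0}{4}.
\]
Since the integrand is at most $K$ on this set, its Lebesgue measure is at least $m_0/(4K)$, so setting $\mu := m_0/(4K)$ completes the argument with $R, l, \mu$ all depending only on $m_0, M_0, E_0, H_0$. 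Everything outside the control of $\int f(\log f)_+$ is just Chebyshev/Markov bookkeeping.
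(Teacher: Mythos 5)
Your proof is correct and follows essentially the same route as the paper: Chebyshev with the energy bound to trap mass in $B_R$, a smallness cutoff $l$ to discard the low set, and a Markov-type estimate driven by an entropy bound to discard the high set, then converting remaining mass into measure. The only cosmetic difference is that where the paper cites the standard fact that $\int f\log(1+f)$ is bounded in terms of $(M_0,E_0,H_0)$, you supply the short Gaussian-splitting argument that proves the equivalent bound on $\int f(\log f)_+$ from scratch.
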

\begin{proof}
    It is well known that there exists $\widetilde{H}_0=\widetilde{H}_0(M_0,E_0,H_0)$ so that 
    \[
    \int_{\R^3} f(v) \log(1+f(v)) \dd v \leq \widetilde{H}_0.
    \]
    Choose $R,l$, and $\mu$ as follows: 
    \begin{equation*}
        R :=\sqrt{\frac {2E_0}{m_0}}, \quad l: =\frac{m_0}{4|B_R|}, \quad \mu := \frac{m_0}{8\Lambda},
    \end{equation*}
    where $\Lambda>0$ satisfies $\log(1+\Lambda)=8\widetilde{H}_0 m_0^{-1}$. Indeed, we have
    \begin{equation}
        \int_{\R^3 \setminus B_R} f(v) \dd v \leq \frac{E_0}{R^2}=\frac{m_0}{2}, \: \int_{B_R \cap \{f < l \}} f(v) \dd v \leq l |B_R|=\frac{m_0}{4}, \: \int_{  \{f >\Lambda \}} f(v) \dd v \leq \frac{\widetilde{H}_0}{ \log(1+\Lambda)}=\frac{m_0}{8}.
    \end{equation}
Hence, we obtain
    \begin{equation*}
       \frac{m_0}{8} \leq  \int_{B_R \cap \{ l \leq f \leq \Lambda \} } f(v) \dd v \leq \Lambda| B_R \cap \{  l\leq f \}|.
    \end{equation*}
\end{proof}

\begin{proof}[Proof of Lemma \ref{lem: eigenvalue}]
Let $e$ be a unit vector. It suffices to show
\begin{equation*}
    \langle \widetilde{A}_{ij}(v)e, e \rangle \gtrsim
    \begin{cases}
         \Jap{v}^{-1} & v\cdot e =0,\\
         \Jap{v}^{-3} & v \parallel e.
    \end{cases}
\end{equation*}
The definition \eqref{e: def diffusion} for $\widetilde{a}$ gives us
\begin{align*}
    \langle \widetilde{A}_{ij}(v)e, e \rangle
    & =\int_{\R^3} \frac{\eta(|w|)}{|w|} \left(1- \left(\frac{w\cdot e}{|w|} \right)^2 \right) f(v-w) \dd w\\
    & \geq l \int_{  \{f(v-w)\geq l \} \cap B_R(v)   } \frac{\eta(|w|)}{|w|}\left(1- \left(\frac{w\cdot e}{|w|} \right)^2 \right)  \dd w.
\end{align*}
Suppose that there is $\eps>0$ such that
\begin{equation} \label{e: eps}
    \left| \{ w\in B_{R}(v) :  1- \left(\frac{w\cdot e}{|w|} \right)^2 < \eps   \}  \right| <\frac{\mu}{2}.
\end{equation}
By Lemma \ref{lem: concentration}, at least one of the sets 
\[
 \left| \{ w\in B_{R}(v) \cap B_{1/2} :  f(v-w)\geq l \text{ and } 1- \left(\frac{w\cdot e}{|w|} \right)^2 \geq \eps \}  \right|
\]
or
\[
 \left| \{ w\in B_{R}(v) \cap B_{1/2}^c :  f(v-w)\geq l \text{ and } 1- \left(\frac{w\cdot e}{|w|} \right)^2 \geq \eps  \}  \right| 
\]
has measure greater than $\mu/4$. Recall that $\eta(s)=|s|^3$ for $s\in [0,1/2]$ and $\eta(s) \geq 1/8$ for $s \in [1/2,\infty)$.
Depending on the cases,
\begin{equation} \label{e: anistropy}
    \langle \widetilde{A}_{ij}(v)e, e \rangle \geq   \frac{l\mu \eps}{4}  \min \left(\frac{1}{8(|v|+R)}, \int_{B_{r_0}} |w|^2 \dd w\right) \gtrsim  l\mu \eps \cdot \min \left( \frac{1}{|v|+R} , \mu^{5/3} \right)
\end{equation}
where $r_0$ satisfies $|B_{r_0}|=\mu/4$. \\
If $|v|<\max(2R,\mu^{-5/3} )$, then there exists a uniform $\eps$ so that \eqref{e: eps} is satisfied. Thus, the inequality \eqref{e: anistropy} gives the lower bound of eigenvalues. Therefore, we can assume without loss of generality that $|v| \geq \max(2R,\mu^{-5/3} )$. Then, the inequality \eqref{e: anistropy} implies
\begin{equation} \label{e: lower}
     \langle \widetilde{A}_{ij}(v)e, e \rangle \gtrsim \frac{2l \mu \eps}{3\Jap{v}} .
\end{equation}
First, suppose that $v\perp e$. Since $|v| \geq 2R$ and $w \in B_R(v)$, 
\[
\left(\frac{w\cdot e}{|w|} \right)^2 \leq \frac{1}{4}.
\]
We choose $\eps=3/4$, then \eqref{e: eps} is satisfied. Therefore, \eqref{e: lower} implies
\begin{equation*}
     \langle \widetilde{A}_{ij}(v)e, e \rangle \gtrsim \frac{ l \mu }{2\Jap{v}} .
\end{equation*}
Next, suppose that $v \parallel e$. The set
\[
\{ w\in \R^3 : 1- \left(\frac{w\cdot e}{|w|} \right)^2 < \eps   \}
\]
is a cone at the origin with an opening angle $\approx \sqrt{\eps}$. Its intersection with $B_R(v)$ is contained in some cylinder with height $2R$ and radius $\approx \sqrt{\eps} |v|$. See the cylinder of Figure \ref{fig:example1}.

\begin{figure}[h]
    \centering
    \includegraphics[width=0.7\linewidth]{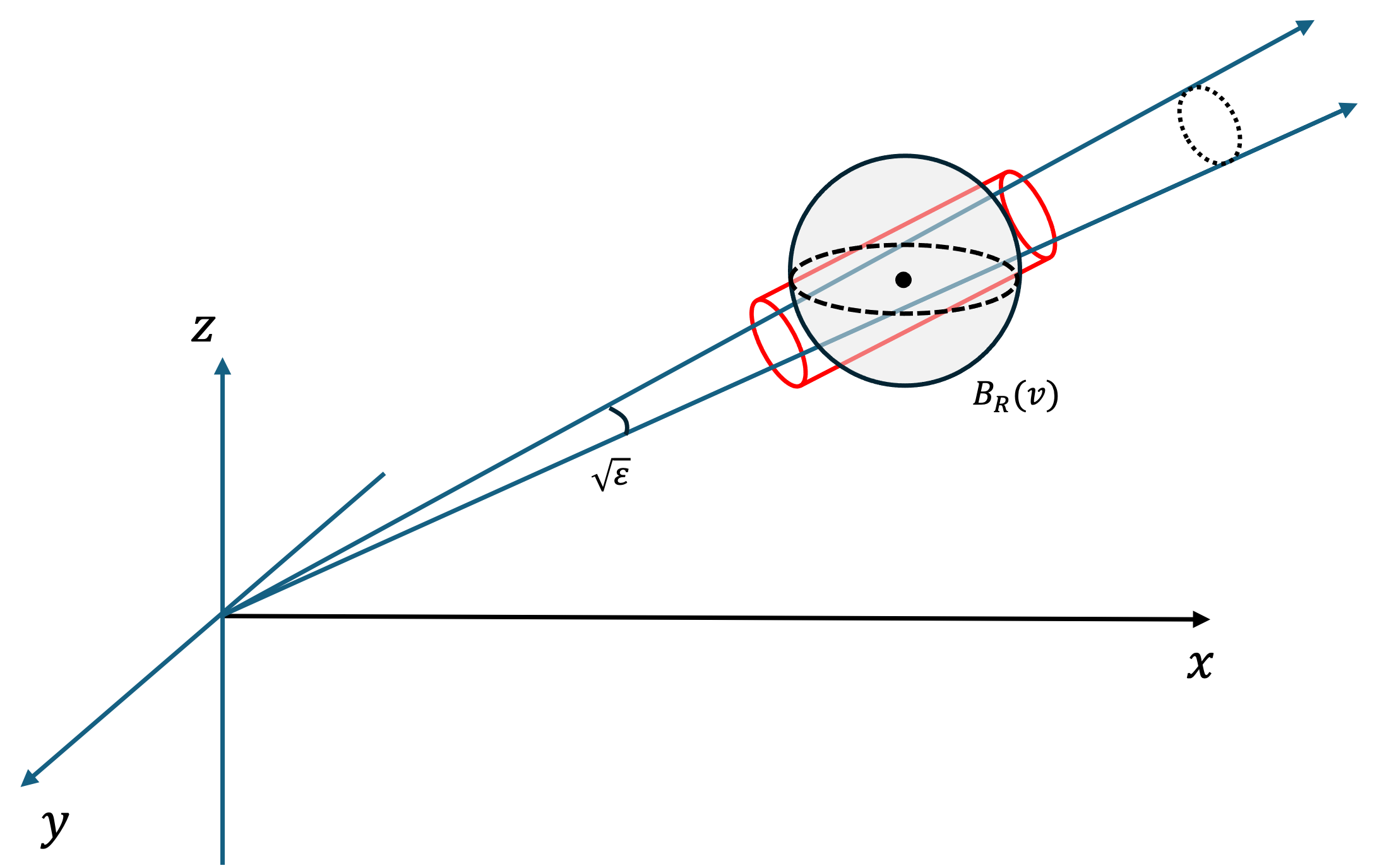}
    \caption{A cone with an opening angle $\sqrt{\eps}$ and $B_R(v)$}
    \label{fig:example1}
\end{figure}
Then,
\begin{equation*} 
    \left| \{ w\in B_{R}(v) :  1- \left(\frac{w\cdot e}{|w|} \right)^2 < \eps   \}  \right| \lesssim R ({\sqrt{\eps}|v|} )^2 = R\eps |v|^2
\end{equation*}
and \eqref{e: eps} is achieved by choosing $\eps \approx \mu(2R)^{-1}|v|^{-2} $.
As a consequence, \eqref{e: lower} implies
\begin{equation*}
     \langle \widetilde{A}_{ij}(v)e, e \rangle \gtrsim \frac{l \mu^2}{2R\Jap{v}^3} .
\end{equation*}
\end{proof}

\begin{remark} \label{rem: mass}
    We introduced the constant $c_0$ rather than using $\gtrsim$ notation in Lemma \ref{lem: eigenvalue} to precisely compute the error term in \eqref{e: thm}. Without loss of generality, we can always make $c_0$ as small as we want at the statement of Lemma \ref{lem: eigenvalue}. Choosing a smaller $c_0$ gives a worse coefficient for the coercivity term, but provides a better error estimate. In this manuscript, we choose $c_0$ to be less than or equal to $2M_0$.\\
    On the other hand, it is also possible to show directly $c_0 \le 2M_0$. Indeed, since $\eta(|v|)/|v|$ is bounded above by $2$, we have $\widetilde{a}_{ij}(v) \le 2 I$. This implies
    \[
    \widetilde{A}[f](v)=\int_{\R^3} f(v-w)\widetilde{a}_{ij}(w) \dd w \le 2 \left(\int_{R^3}f(v-w) \dd w \right) I \le 2M_0 I.
    \]
    Choosing $v$ arbitrarily close to the origin in Lemma \ref{lem: eigenvalue}, we have $c_0 \le 2M_0.$
\end{remark}

\noindent

 We partition $\R^3-\{0\}$ into annuli $\AN_N:=\{v \in \R^3 : N-1 < |v| \le N \}$ where $N$ is a positive integer. Fix $N$ and let's consider a point $v_0 \in \AN_N$ such that $|v_0|=N.$  We consider a spherical cap $\SP_{v_0} $ of $\partial B_N(0)$ centered at $v_0$ with a radius $1$. That is,
    \begin{equation}
        \SP_{v_0} := \partial B_N(0) \cap B_1(v_0)= \{v \in \R^3 : |v|=N, \quad |v-v_0|\le 1 \}.
    \end{equation}
Connecting $\SP_{v_0}$ with the origin determines the cone $\mathcal{C}_{v_0}$ centered at the origin. Define a truncated spherical cone $F_{v_0}$ as
\begin{equation}
    F_{v_0} := \mathcal{C}_{v_0} \cap \AN_N.
\end{equation}
We often write $F_0$ instead of $F_{v_0}$ for the sake of simplicity. 

\begin{figure}[h]
    \centering
    \includegraphics[width=0.6\linewidth]{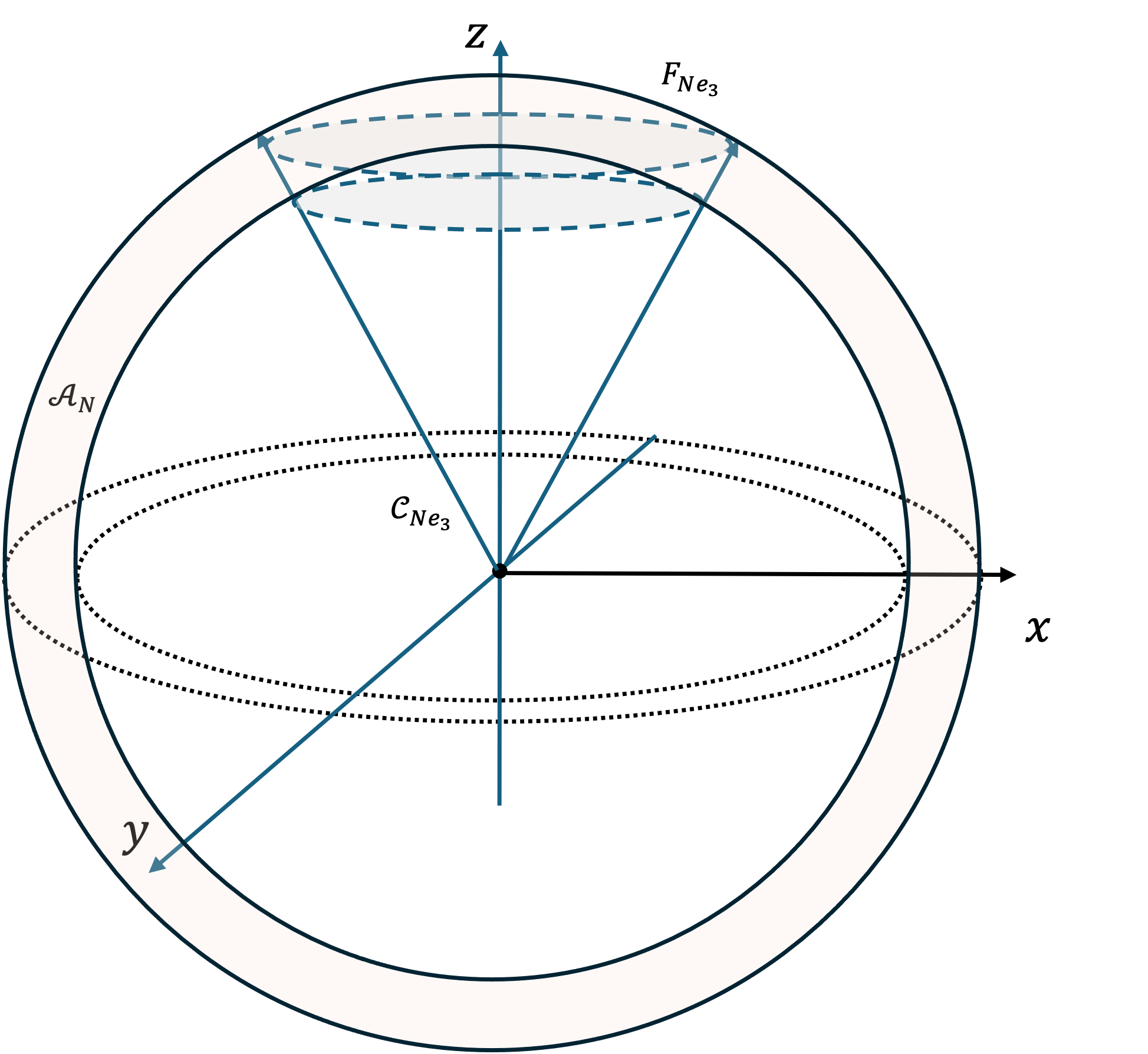}
    \caption{$\AN_N, \mathcal{C}_{Ne_3}$, and $F_{Ne_3}$}
    \label{fig:example2}
\end{figure}

We prove the following covering lemma, inspired by the Vitali covering lemma.
 \begin{lemma}[Covering lemma] \label{lem: covering}
    For every $N$, there exist points $v_1,\cdots ,v_{c(N)}$ so that $\SP_{v_1},\cdots,\SP_{v_{c(N)}}$ cover $\bdary B_N(0)$, with no points of $\bdary B_N(0)$ being covered more than $16$ times. In particular, $F_{v_1},\cdots,F_{v_{c(N)}}$ cover $\AN_N$, with no points of $\AN_N$ being covered more than $16$ times.
 \end{lemma}
 \begin{proof}
     Let $\theta$ be the opening of the cone $\mathcal{C}_{v_0}$, then we have
     \[
     N\sin\left(\frac{\theta}{4}\right)=\frac{1}{2}.
     \]
     We start by defining a spherical cap $\tilde{\SP}_{v_0}$ of $\bdary B_N(0)$ with a different size:
     \[
     \tilde{\SP}_{v_0} := \bdary B_N(0) \cap \{v \in \R^3 : |v-v_0|\le \tau_1 \},
     \]
     where
     \[
     N\sin\left(\frac{\theta}{12}\right)=\frac{\tau_1}{2}.
     \]
        Note that $\tau_1$ is chosen so that the opening of the corresponding cone $\tilde{\mathcal{C}}_{v_0}$ of $\tilde{\SP}_{v_0}$ is $\frac{\theta}{3}$.\\
     Since $\bdary B_N(0)$ is compact, we can find a finite collection of spherical caps that $\{\tilde{\SP}_{j} \}_j$ covers $\bdary B_N(0)$. We choose an arbitrary cap from the collection and call it $\tilde{\SP}_{i_1}$. Inductively, assume $\tilde{\SP}_{i_1}, ,\cdots \tilde{\SP}_{i_k}$ are chosen. If there is a spherical cap that is disjoint from $\tilde{\SP}_{i_1} \cup \cdots \cup \tilde{\SP}_{i_k}$, let $\tilde{\SP}_{i_{k+1}}$ be such spherical cap (If there are more than one spherical cap satisfies this condition, pick one arbitrarily). Otherwise, we set $c(N):=k$ and terminate the recursive definition. Relabel the spherical caps as $ \tilde{\SP}_{v_1},\cdots, \tilde{\SP}_{v_{c(N)}}$. We claim that $\SP_{v_1},\cdots,\SP_{v_{c(N)}}$ form a covering of $\bdary B_N(0)$ that satisfies the statement of the lemma.\\
     It is enough to show that each point $x \in \bdary B_N(0)$ is covered at least once and at most $16$ times by the spherical covers $\SP_{v_1},\cdots,\SP_{v_{c(N)}}$. Since $\{\tilde{\SP}_{j} \}_j$ is a covering, there exists $j$ such that $x\in \tilde{\SP}_j$. By the inductive definition, there is $i\in \{1,\cdots,c(N)\}$ such that $\tilde{\SP}_{v_i}$ intersects $\tilde{\SP}_j$. It is easy to see that $x\in \tilde{\SP}_j \subset \SP_{v_i}$, so $x$ is covered at least once.\\
     Now, suppose that $x$ is covered exactly $m$ times by the covering $\{\SP_{v_1},\cdots,\SP_{v_{c(N)}} \}$. Without loss of generality, suppose that $x\in \SP_{v_i}$ for $i=1,2,\cdots,m$. It follows that
     \[
     v_1, \cdots, v_m \in \SP_x.
     \]
     Then,
     \[
      \overline{\SP}_x := \bdary B_N(0)\cap \{v \in \R^3 : |v-x|\le \tau_2 \} \supset \tilde{\SP}_{v_1},\cdots, \tilde{\SP}_{v_m } \
     \]
     where
     \[
     N\sin\left(\frac{\theta}{3}\right)=\frac{\tau_2}{2}.
     \]
        Note that $\tau_2$ is chosen so that the opening of the corresponding cone $\mathcal{C}$ of $\tilde{\SP}$ is $\frac{4\theta}{3}$. The surface area of $\tilde{\SP}_{v_i}$ is
        \[
        2\pi N^2 \left(1-\cos \left(\frac{\theta}{6}\right) \right)=4\pi N^2 \sin^2 \left(\frac{\theta}{12}\right)=\pi \tau_1^2.
        \]
        Similarly, the surface area of $\tilde{\SP}$ is $\pi \tau_2^2$. Since $\tilde{\SP}_{v_1},\cdots, \tilde{\SP}_{v_m }$ are disjoint subsets of $\overline{S}_x$, we have
        \[
        m \pi \tau_1^2 \le \tau_2^2.
        \]
        Since $\frac{\sin{x}}{x}$ is a decreasing function for $0<x<\pi$, we conclude $m \le 16.$
 \end{proof}
\begin{remark} 
    Each spherical cap $\SP_{v_i}$ has a surface area $\pi$. Since the total surface area of $\bdary B_N(0)$ is $4\pi N^2$, we deduce that $c(N) \approx N^2$. However, the number of truncated spherical cone covers of Lemma \ref{lem: covering} does not affect the proof of Proposition \ref{lem: bound}.
\end{remark}

\begin{remark}
    It is possible to work with conical frustums instead of truncated spherical cones. The geometry of a conical frustum is slightly simpler, as its base and top are flat and parallel. However, conical frustums do not provide an exact partition of the annulus $\AN_N$, so we need to ensure that the overlaps occur a finite number of times independent of $N$.
\end{remark}
\begin{remark}
    The previous version of this manuscript had an issue with the covering lemma. The linear map $L_N:v\to Nv$ does not send an ellipsoid to a sphere unless we shift the map $L_N$ to be centered at $v_0$. However, then $L_N$ depends on the center of the ellipsoid so it does not give a uniform way to send each ellipsoid to a sphere simultaneously. 
\end{remark}

In \cite[Lemma 4.1]{cameron2018global}, Cameron, Silvestre, and Snelson introduce a change of variables to capture the anisotropy of the diffusion.
A similar change of coordinates was also defined in  \cite{imbert2022global}.
Inspired by their definition, we define a linear transformation $T$ as
\begin{equation*}
Te :=\begin{cases}
e & e \perp v_0,\\
Ne & e \parallel v_0.
\end{cases}
\end{equation*}
Note that $T$ is symmetric, i.e., $T=T^t$. \\

Our strategy is to perform a change of coordinates $T$ near $v_0$ to remove the anisotropy before applying the extension theorem for Sobolev spaces. Subsequently, we apply the covering lemma \ref{lem: covering} to cover the whole $\AN_N$ with truncated spherical cones $F_{v_0}$, not only near $v_0$.\\

We begin by recalling a classic result from \cite[Chapter VI]{stein1970book} by Stein concerning the Sobolev extension for a Lipschitz domain $\Omega \subset \R^3$. Stein works with a class of Lipschitz domains, known as \textit{minimally smooth} domains, whose boundaries satisfy the following conditions: \\
There exist an $\eps >0$, an integer $J$, a Lipschitz constant $L>0$, and a countable collection of balls $\{B_\delta(x_j)\}_{j=1,2,\cdots}$ so that
\begin{enumerate}[i.]
    \item No point of $\R^3$ is contained more than $J$ distinct balls.
    \item An $(\delta/2)-$neighborhood of $\bdary \Omega$ is covered by the balls. 
    \item For each $j$, there exists a Lipschitz function $L_j:\R^2 \to \R$ with a Lipschitz constant $L$ such that $ B_\delta(x_j) \cap \Omega$ can be translated and rotated to coincide with the intersection of $B_\delta(0)$ and the supergraph of $L_j$.
\end{enumerate}
Note that the minimally smoothness condition is sometimes referred to as the strong local Lipschitz condition(See \cite{adams2003book}). 
\begin{thm}[From \cite{stein1970book}] \label{thm: extension}
    Let $\Omega$ be a minimally smooth domain in $\R^3$. Then, there exists a bounded extension operator
    \begin{equation}
        P_\Omega:H^1(\Omega) \to H^1(\R^3)
    \end{equation}
    such that for all $f\in H^1(\Omega)$,
    \begin{equation}
        P_\Omega f\vert_\Omega =f.
    \end{equation}
    The operator norm of $P_\Omega$ only depends on the constants of the minimally smooth domain $\Omega$.
\end{thm}
A similar result was achieved by Calderon \cite{calderon1961book}. See also \cite{jones1981sobolev,luke2006sobolev} for stronger versions of the Sobolev extension theorem.\\

Let $\Omega_0$ be the image of $F_0$ under $T$. That is,
\[
\Omega_0:= T(F_0) =T \mathcal{C}_{v_0} \cap T \AN_N
\]
In other words, $\Omega_0$ is an intersection of a cone and an ellipsoidal annulus. Thus, $\Omega_0$'s surface consists of the lateral surface and two ellipsoidal caps. Moreover, since $\Omega_0$ is a surface of revolution, it is easier to analyze the geometry of its cross-section. See Figure \ref{fig:example3}.

\begin{figure}[h]
    \centering
    \includegraphics[width=0.8\linewidth]{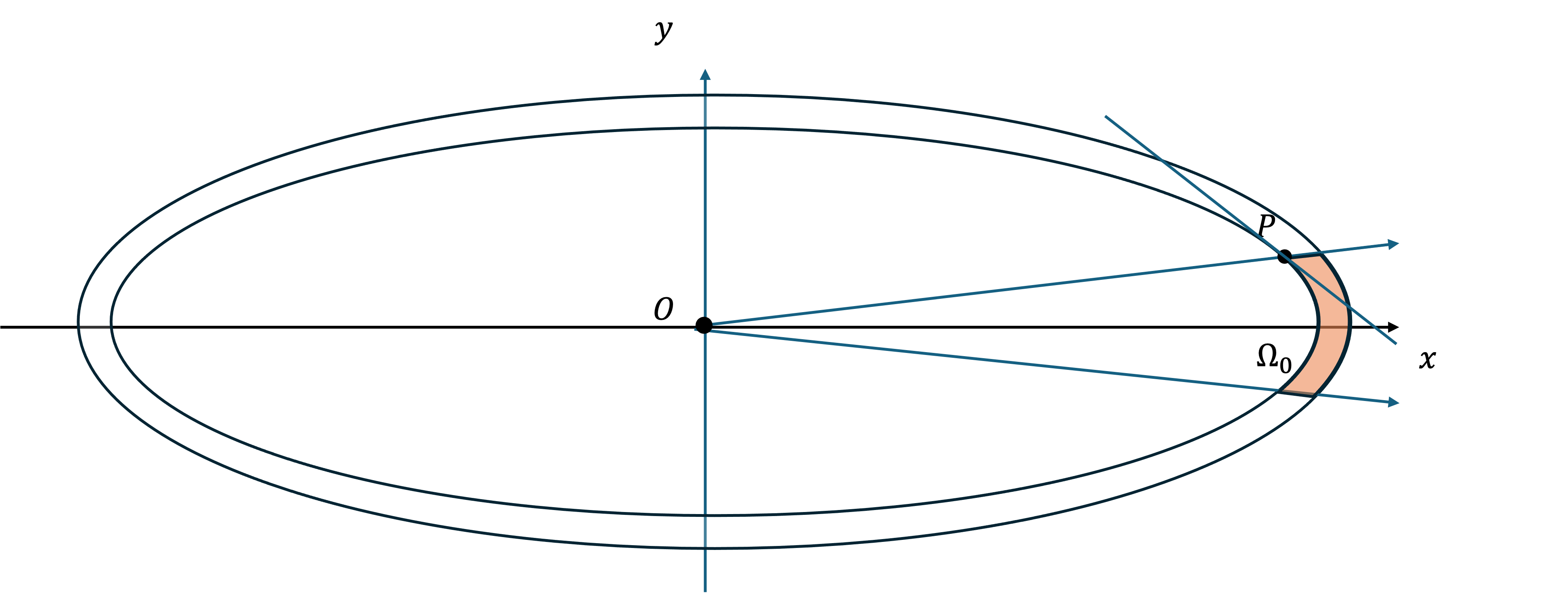}
    \caption{$\Omega_0$ and a point $P$}
    \label{fig:example3}
\end{figure}
Assume $N>1$ without loss of generality.
It is clear that the outer ellipsoid cap has a curvature of at most $1$ at every point. Moreover, the outer ellipsoid cap meets the lateral surface at an obtuse angle. Similarly, the inner ellipsoid cap has a curvature of at most $\frac{N}{N-1} \le 2$ at every point.
Let's verify that the angle between the inner ellipsoid cap and the lateral surface does not degenerate as $N\to \infty$. Without loss of generality, assume $v_0=Ne_1$. Let $P$ be a point on the intersection of the two faces, as shown in Figure \ref{fig:example3}. Recall $\theta$ is the opening of the cone $\mathcal{C}_{Ne_1}$, then
\[
P= \left(N(N-1)\cos\left(\frac{\theta}{2}\right), (N-1) \sin \left(\frac{\theta}{2}\right),0 \right).
\]
The $x$-axis and the line $OP$ meet at an angle less than $\theta$, where $O$ is the origin. The slope of the tangent line at $P$ is
\[
\frac{dy}{dx}=-\frac{N(N-1)\cos\left(\frac{\theta}{2}\right) }{(N-1) \sin \left(\frac{\theta}{2}\right) } \cdot \frac{(N-1)^2}{(N-1)^4}= -\frac{N\cos \left(\frac{\theta}{2}\right)}{(N-1)^2  \sin \left(\frac{\theta}{2}\right) }=-\frac{N^2}{(N-1)^2}\frac{1-\frac{1}{2N^2}}{\sqrt{1-\frac{1}{4N^2}}} \le -1.
\]

It implies that the inner ellipsoid cap meets the lateral surface with an angle that is bounded away from $0$ for every $N$.
We deduce that $\Omega_0$ satisfies the minimally smoothness condition and the constants associated with this condition do not depend on $N$. Therefore, we conclude
\begin{equation} \label{fact}
    \norm{P_{\Omega_0}} \approx 1,
\end{equation}
by Theorem \ref{thm: extension}.
\begin{lemma} \label{lem: ellipsoid}
Let $N$ be fixed. For a point $v_0 \in \AN_N$ such that $|v_0|=N$, we have
\begin{equation*}
   \int_{F_0} \widetilde{A}_{ij}(v) \partial_i s(v) \partial_j s(v) \dd v  + \frac{2M_0}{N} \int_{F_0} |s(v)|^2 \dd v \gtrsim \frac{c_0}{8N^{5/3}}\norm{s}_{L^6(F_0)}^2.
\end{equation*}

\end{lemma}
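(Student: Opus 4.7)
The overall strategy is to reduce the estimate to a standard Sobolev embedding on the unit ball by applying the change of variables $u = T_0 v$, which simultaneously flattens $F_0$ to $B_1$ and converts the anisotropic coercivity from Lemma \ref{lem: eigenvalue} into an essentially isotropic $H^1$-coercivity in the $u$-variable. The point is that the radial stretching factor $N$ in $T$ is precisely what is needed to convert the eigenvalues $1/N^3$ (radial) and $1/N$ (tangential) of $\widetilde a$ into a single uniform $1/N$, and the resulting scaling in every term of the inequality balances out exactly, leaving the three-dimensional embedding $H^1(B_1) \hookrightarrow L^6(B_1)$.

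In concrete terms, after rotating I may assume $v_0 = |v_0|e_1$, so $T = \diag(N,1,1)$; put $\tilde s(u) := s(T^{-1}u+v_0)$, so $\partial_1 s = N\partial_1\tilde s$, $\partial_j s = \partial_j\tilde s$ for $j=2,3$, and $\dd v = \dd u/N$. First I verify that for every $v \in F_0$ the unit vector $\hat v := v/|v|$ lies within an angle $O(1/N)$ of $\hat v_0 := v_0/|v_0|$, which follows at once from the geometry of $F_0$ (tangential radius $1$, distance to the origin $\approx N$). Using this in Lemma \ref{lem: eigenvalue} yields on $F_0$ the pointwise bound
\[
\widetilde{a_{ij}}(v)\,\partial_i s\,\partial_j s \gtrsim \frac{1}{N^3}(\partial_1 s)^2 + \frac{1}{N}\bigl((\partial_2 s)^2+(\partial_3 s)^2\bigr),
\]
with the $O(1/N^2)$ cross-contamination between the true radial direction $\hat v$ and the fixed axis $\hat v_0$ absorbed into the leading tangential eigenvalue $1/N$. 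Pushing forward, the right-hand side becomes $N^{-1}|\nabla_u\tilde s|^2$, and after multiplying by the Jacobian $1/N$ we get $\int_{F_0}\widetilde{a_{ij}}\partial_i s\,\partial_j s\,\dd v \gtrsim N^{-2}\|\nabla_u\tilde s\|_{L^2(B_1)}^2$. The two remaining terms transform as $(M_0/N)\int_{F_0} s^2\dd v = (M_0/N^2)\|\tilde s\|_{L^2(B_1)}^2$ and $N^{-5/3}\|s\|_{L^6(F_0)}^2 = N^{-2}\|\tilde s\|_{L^6(B_1)}^2$, so after factoring out $N^{-2}$ the claim becomes
\[
\|\nabla_u \tilde s\|_{L^2(B_1)}^2 + M_0\,\|\tilde s\|_{L^2(B_1)}^2 \gtrsim \|\tilde s\|_{L^6(B_1)}^2,
\]
which is the standard Sobolev embedding (with $M_0$ harmlessly absorbed into the implicit constant allowed by the $\gtrsim$ convention).

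The main obstacle is the first step above: transferring the coercivity bound from the true radial direction $\hat v$ to the fixed axis $\hat v_0$ at every point of $F_0$. This requires decomposing an arbitrary $\xi$ into components parallel and orthogonal to $\hat v_0$ and carefully tracking how the small angle between $\hat v$ and $\hat v_0$ mixes the eigenvalues $1/N^3$ and $1/N$; the punchline is that this mixing contaminates the tangential direction by only $O(1/N^2)$, which is dominated by the clean tangential eigenvalue $1/N$, and contaminates the radial direction by only $O(1/N^3)$ worth of the larger tangential eigenvalue, leaving the radial eigenvalue essentially unchanged. The boundary case $N=1$ is trivial: $T$ is the identity, $F_0$ is a unit ball, and the claim reduces directly to Lemma \ref{lem: eigenvalue} plus Sobolev embedding without any rescaling.
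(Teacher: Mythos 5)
Your proof is correct and follows essentially the same route as the paper: the change of variables by $T_0$, the reduction to an isotropic $H^1$-coercivity on $B_1$ via the eigenvalue bound of Lemma \ref{lem: eigenvalue}, and the Sobolev embedding $H^1(B_1)\hookrightarrow L^6(B_1)$, with identical bookkeeping of the scaling factors. You are in fact more explicit than the paper about the one subtle point---transferring the anisotropic bound from the $\hat v$-frame to the fixed $\hat v_0$-frame using the angle bound $\theta\lesssim 1/N$ on $F_0$---which the paper's own displayed inequality $b\geq c_0\diag(N,1,1)\diag(\Jap{v_0}^{-3},\Jap{v_0}^{-1},\Jap{v_0}^{-1})\diag(N,1,1)$ passes over silently.
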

\begin{proof}
  
Apply the change of coordinates by $T$.  Moreover, let $u=Tv$ and $\widetilde{s}(u)=s(v)$. 
Then,
\begin{align*}
    \int_{F_0} \widetilde{A}_{ij}(v) \partial_i s(v) \partial_j s(v) \dd v 
    &= \int_{\Omega_0} \widetilde{A}_{ij}(v) T_{ki} \partial_k \widetilde{s}(u)  T_{lj} \partial_l \widetilde{s}(u) \frac{1}{|\det T| } \dd u\\
     &=  \frac{1}{N}  \int_{\Omega_0}   T_{ki}\widetilde{A}_{ij}(v)  T_{lj}  \partial_k \widetilde{s}(u) \partial_l \widetilde{s}(u)\dd u\\
      &=  \frac{1}{N}  \int_{\Omega_0}   b_{kl}  \partial_k \widetilde{s}(u) \partial_l \widetilde{s}(u)\dd u,
\end{align*}
where the matrix $b:=T \widetilde{A} T^t$. Note that $b$ also equals to $T^t \widetilde{A} T$ since $T$ is symmetric. \\
Suppose $e$ is a unit vector. Then, 
\begin{equation*}
    \Jap{be,e}=\Jap{\widetilde{A}Te, Te}=
        \begin{cases}
            \Jap{\widetilde{A}e,e} & e \perp v_0,\\
            N^2\Jap{\widetilde{A}e,e} & e \parallel v_0.
        \end{cases}
\end{equation*}
From Lemma \ref{lem: eigenvalue}, it follows that
\begin{equation*}
    b\geq c_0 \frac{N^2}{(N+1)^3} I.
\end{equation*}
Therefore, 
\begin{align} \label{eq: covering} 
    \int_{F_0} \widetilde{A}_{ij}(v) \partial_i s(v) \partial_j s(v) \dd v 
     & \geq c_0 \frac{N}{(N+1)^3} \int_{\Omega_0} |\grad \widetilde{s}(u)|^2 \dd u.
\end{align}

Recall the existence of an extension operator $P_{\Omega_0}$ from Theorem \ref{thm: extension}.
By the Sobolev embedding $H^1 \hookrightarrow L^6$ in 3D,
\begin{equation} \label{e: Sobolev}
     \int_{\Omega_0} \left(|\grad \widetilde{s}(u)|^2 +  \widetilde{s}(u)^2  \right) \dd u = \norm{\widetilde{s}}_{H^1(\Omega_0)}^2  \ge \norm{P_{\Omega_0}}^{-2} \norm{P\widetilde{s}}_{H^1(\R^3)}^2 \gtrsim  \norm{P_{\Omega_0}}^{-2} \norm{P\widetilde{s}}_{L^6(\R^3)}^2  \geq \norm{P_{\Omega_0}}^{-2} \norm{\widetilde{s}}_{L^6(\Omega_0)}^2
\end{equation}
where $\norm{P_{\Omega_0}}$ is the operator norm of $P_{\Omega_0}$. By \eqref{fact}, we can absorb $\norm{P_{\Omega_0}}$ into $\gtrsim$, then
\[
 \int_{\Omega_0} \left(|\grad \widetilde{s}(u)|^2 +  \widetilde{s}(u)^2  \right) \dd u  \gtrsim \norm{\widetilde{s}}_{L^6(\Omega_0)}^2.
\]
Combining \eqref{eq: covering} with
\begin{equation} \label{e: change 1}
    \int_{\Omega_0}  \widetilde{s}(u)^2 \dd u=  N \int_{F_0} s(v)^2 \dd v,
\end{equation}
\begin{equation} \label{e: change 2}
    \norm{\widetilde{s}}_{L^6(\Omega_0)}^2=N^{1/3} \left( \int_{F_0
    } s(v)^6 \dd v \right)^{1/3},
\end{equation}
we obtain
\begin{equation} \label{e: estimate}
    \int_{F_0} \widetilde{A}_{ij}(v) \partial_i s(v) \partial_j s(v) \dd v + c_0\frac{N^2}{(N+1)^3}\int_{F_0}   s(v)^2 \dd v \gtrsim c_0\frac{N^{4/3}}{(N+1)^3}  \norm{s}_{L^6(F_0)}^2.
\end{equation}
We know that $c_0\le 2M_0$ from Remark \ref{rem: mass}, therefore we get
\begin{equation*}
   \int_{F_0} \widetilde{A}_{ij}(v) \partial_i s(v) \partial_j s(v) \dd v  + \frac{2M_0}{N} \int_{F_0} |s(v)|^2 \dd v \gtrsim \frac{c_0}{8N^{5/3}} \norm{s}_{L^6(F_0)}^2.
\end{equation*}
  
\end{proof}

\noindent
As a consequence of Lemma \ref{lem: ellipsoid} and Lemma \ref{lem: covering}, we have the following corollary.
\begin{cor} 
 For each $N$, we have
\begin{equation} \label{e: local estimate}
    \int_{\AN_N} \widetilde{A}_{ij}(v) \partial_i s(v) \partial_j s(v) \dd v+ \frac{2M_0}{N}\int_{\AN_N} |s(v)|^2 \dd v \gtrsim \frac{1}{N^{5/3}} \norm{s}_{L^6(\AN_N)}^2.
\end{equation}
\end{cor}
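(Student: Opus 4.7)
The plan is to combine Lemma \ref{lem: ellipsoid} with the covering of $A_N$ provided by Lemma \ref{lem: covering}, exploiting both the finite multiplicity of the cover on the LHS and a simple $\ell^1$-vs-$\ell^3$ inequality on the RHS.

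\medskip

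\textbf{Step 1: Apply Lemma \ref{lem: ellipsoid} on each ellipsoid.} Let $v_1,\dots, v_{c_N}\in A_N$ be the points produced by Lemma \ref{lem: covering}, and write $F_i := F_{v_i}$. For each $i$, Lemma \ref{lem: ellipsoid} gives
\begin{equation*}
\int_{F_i} \widetilde{a_{ij}}(v)\partial_i s(v)\partial_j s(v) \dd v + \frac{M_0}{N}\int_{F_i} |s(v)|^2 \dd v \gtrsim \frac{1}{N^{5/3}} \norm{s}_{L^6(F_i)}^2.
\end{equation*}

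\textbf{Step 2: Sum over $i$.} Since by Lemma \ref{lem: covering} the union $X=\bigcup_i F_i$ is contained in $\widetilde{A}_N$ and no point is covered more than $64$ times, for any nonnegative integrand $g$ we have $\sum_{i=1}^{c_N}\int_{F_i} g \dd v \leq 64\int_{\widetilde{A}_N} g \dd v$. Applying this to $\widetilde{a_{ij}}\partial_i s\partial_j s$ (which is nonnegative since $\widetilde{a}$ is positive semidefinite) and to $|s|^2$, the sum of the left hand sides is bounded above by
\begin{equation*}
64\int_{\widetilde{A}_N}\widetilde{a_{ij}}(v)\partial_i s(v)\partial_j s(v)\dd v + \frac{64 M_0}{N}\int_{\widetilde{A}_N}|s(v)|^2\dd v.
\end{equation*}

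\textbf{Step 3: Lower bound for the sum of $L^6$ norms.} Set $x_i := \norm{s}_{L^6(F_i)}^2 \geq 0$. Since $A_N\subset\bigcup_i F_i$,
\begin{equation*}
\norm{s}_{L^6(A_N)}^6 \leq \int_{\bigcup_i F_i} |s|^6\dd v \leq \sum_{i=1}^{c_N}\int_{F_i}|s|^6\dd v = \sum_i x_i^3.
\end{equation*}
The inclusion $\ell^3\hookrightarrow\ell^1$ gives $\sum_i x_i \geq \bigl(\sum_i x_i^3\bigr)^{1/3}$, hence
\begin{equation*}
\sum_{i=1}^{c_N}\norm{s}_{L^6(F_i)}^2 \;\geq\; \norm{s}_{L^6(A_N)}^2.
\end{equation*}

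\textbf{Step 4: Conclude.} Combining Steps 2 and 3 and absorbing the universal constant $64$ into the $\gtrsim$ notation yields \eqref{e: local estimate}. The only mildly nontrivial point is Step 3: the Sobolev estimate naturally produces $L^6$-squared quantities, but these are not subadditive in the usual sense, so one has to pass through the sixth powers and then use $\ell^3\hookrightarrow\ell^1$ to recover an $L^6$ norm on the full annulus. Everything else is bookkeeping on the covering.
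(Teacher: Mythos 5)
Your argument is correct and is essentially the same as the paper's: apply Lemma \ref{lem: ellipsoid} on each ellipsoid of the cover, use the multiplicity bound from Lemma \ref{lem: covering} to control the left-hand side after summing, and use the inequality $\sum_i x_i \geq \bigl(\sum_i x_i^3\bigr)^{1/3}$ with $x_i = \norm{s}_{L^6(F_i)}^2$ to recover the $L^6(A_N)$ norm on the right. The paper writes this last step directly as $\sum_i \bigl(\int_{F_i}|s|^6\bigr)^{1/3} \geq \bigl(\sum_i \int_{F_i}|s|^6\bigr)^{1/3} \geq \bigl(\int_{A_N}|s|^6\bigr)^{1/3}$, which is the identical inequality.
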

\begin{proof}
 By Lemma \ref{lem: ellipsoid},
 We have the covering $\{F_1, F_2,\cdots ,F_{c(N)}\}$ of $\AN_N$ by Lemma \ref{lem: covering}. Apply Lemma \ref{lem: ellipsoid} for each cover $F_i$ and add them all for $i=1,\cdots, c(N)$. Then, 
 \begin{equation*} 
    16\int_{ \AN_N } \widetilde{A}_{ij}(v) \partial_i s (v) \partial_j s(v) \dd v  + \frac{32M_0}{N} \int_{\AN_N} |s(v)|^2 \dd v \gtrsim \frac{c_0}{8N^{5/3}} \sum_{i=1}^{c(N)}  \left( \int_{F_i} |s(v)|^6 \dd v \right)^{1/3}.
\end{equation*}
We conclude the proof since
\begin{equation*}
    \frac{1}{N^{5/3}} \sum_{i=1}^{c(N)}  \left( \int_{F_{v_i}} |s(v)|^6 \dd v \right)^{1/3} \geq  \frac{1}{N^{5/3}} \left( \sum_{i=1}^{c(N)}   \int_{F_{v_i}} |s(v)|^6 \dd v \right)^{1/3} \geq \frac{1}{N^{5/3}} \left( \int_{\AN_N} |s(v)|^6 \dd v \right)^{1/3}.
\end{equation*}
\end{proof}
\noindent
We are ready to prove Proposition \ref{lem: bound} and consequently our main Theorem \ref{thm}.
\begin{proof}[Proofs of Proposition \ref{lem: bound} and Theorem \ref{thm}]
Summing up \eqref{e: local estimate} for all $N$, we obtain
\begin{align*} \label{e: proof}
        \sum_{N=1}^\infty \frac{1}{N^{5/3}} \norm{s}_{L^6(\AN_N)}^2 
     & \lesssim \sum_{N=1}^\infty \left( \int_{\AN_N}\widetilde{A}_{ij}(v) \partial_i s(v) \partial_j s(v) \dd v  + \frac{2M_0}{N} \int_{\AN_N} |s(v)|^2 \dd v\right)\\
     & = \int_{\R^3}\widetilde{A}_{ij}(v) \partial_i s(v) \partial_j s(v) \dd v  + 2M_0 \sum_{N=1}^\infty \frac{1}{N} \int_{\AN_N} |s(v)|^2 \dd v\\
     & \leq \int_{\R^3}\widetilde{A}_{ij}(v) \partial_i s(v) \partial_j s(v) \dd v  +8 M_0^2.
\end{align*}
In the third inequality, we used \eqref{e: new mass} and the crude estimate $1/N \leq 1$. On the other hand, 
\begin{equation}
    \sum_{N=1}^\infty \frac{1}{N^{5/3}} \norm{s}_{L^6(\AN_N)}^2
    \gtrsim   \sum_{N=1}^\infty   \left( \int_{\AN_N} \frac{|s(v)|^6}{\Jap{v}^5} \dd v \right)^{1/3} 
    \geq \left( \sum_{N=1}^\infty   \int_{\AN_N} \frac{|s(v)|^6}{\Jap{v}^5} \dd v \right)^{1/3}
    =   \left( \int_{\R^3 } \frac{|s(v)|^6}{\Jap{v}^5} \dd v \right)^{1/3}.
\end{equation} 
Bootstrapping the inequalities above, we proved Proposition \ref{lem: bound},
\begin{equation*}
    \int_{\R^3} \widetilde{A}_{ij} (v) \partial_i s(v) \partial_j s(v) \dd v +8M_0^2 \gtrsim   \left( \int_{\R^3} \frac{s(v)^6}{\Jap{v}^5} \dd v \right)^{1/3}.
\end{equation*}
From \eqref{e: main}, Theorem \ref{thm} follows since
\begin{equation}
    D(f)+24M_0^2 \gtrsim \left( \int_{\R^3} \frac{s(v)^6}{\Jap{v}^5} \dd v\right)^{1/3} =4 \norm{f}_{L^3_{-5/3}}.
\end{equation}
\end{proof}

\section{An optimality of the weighted Lebesgue space $L^3_{-5/3}$}
In the previous section, we found that the entropy dissipation is bounded below in terms of $L^3_{-5/3}$ norm. In this section, we prove the weighted Lebesgue norm $L^3_{-5/3}$  is optimal as stated in Theorem \ref{thm2}.\\
Let $f(v)$ denote the standard Gaussian function, i.e.,
\[
f(v) : =\frac{1}{(2\pi)^{3/2}} e^{-|v|^2/2}.
\]
Choose a radial bump function $\varphi \in C^\infty_c(\R^3)$ supported on $B_1$ with mass $1$, i.e., $\norm{\varphi}_{L^1(\R^3)}=1$. More precisely,
\begin{align}
\varphi(v) :=
\begin{cases} \alpha_0 \exp\left( -\frac{1}{1-|v|^2} \right) & |v| < 1,\\
0 & |v|\geq 1,
\end{cases}
\end{align}
where $\alpha_0$ ensures that mass is $1$. Let $N\geq 2$ be a positive integer and $B$ be a positive number. We will determine $B,N$ later and only assume $B\ge N^6$ now. Define $g: \R^3 \to \R $ by
\begin{equation}
g(v):=cB^3N \varphi(BN(v_1-N),Bv_2, Bv_3),
\end{equation}
then the support of $g$ is $E:=\{(x,y,z): N^2(x-N)^2+y^2+z^2 \le 1 \}$. Note that $g(v)$ is scaled in the way to capture an anistropy of the diffusion matrix.
We choose
\[
c:=\frac{1}{BN^5},
\]
then it follows that $c \leq N^{-2} \leq N^6 \leq B.$

Our strategy is to compute the entropy dissipation of $h:=\max(f,g)$. The function $h$ is essentially a standard Gaussian function $f$ with the error $g$. Although, we use $\max(f,g)$ instead of $f+g$ for technical reasons.

Define the sets $E_f:= \{ v \mid  f(v) >g(v)\}$ and $E_g:=\{ v  \mid f(v) < g(v)\}$. 
Since the support of $g$ is $E$, we have $E_g \subset E$. 
We also define the change of variables as $\tilde{v}=(BN(v_1-N),Bv_2, Bv_3)$ and $\tilde{w}=(BN(w_1-N),Bw_2, Bw_3)$, so that $g(v)=cB^3N \varphi(\tilde{v})$ and $g(w)=cB^3N \varphi(\tilde{w})$.

\subsection{Hydrodynamic quantities of $h$}

We check that the hydrodynamic quantities of $h$ are bounded.
\begin{prop} \label{prop: hydrodynamic}
Hydrodynamic quantities of $h(v)$ are bounded as follows:
\begin{gather} 
    \label{e: hydrodynamic4}
     1 \leq \int_{\R^3} h(v)  \dd v \leq 9,\\  
    \label{e: hydrodynamic5}
    \int_{\R^3} h(v) |v|^2 \dd v \leq 9 ,\\
    \label{e: hydrodynamic6}
   \int_{\R^3} h(v) \log h(v) \leq 10+\beta_0,
\end{gather}
where $\beta_0$ is a constant defined as $\int_{\R^3} \varphi(v)|\log \varphi(v) | \dd v$.
\end{prop}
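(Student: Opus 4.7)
The plan is to exploit the pointwise bound $h = \max(f,g) \leq f + g$ for all upper bounds and $h \geq f$ for the mass lower bound, combined with the change of variables $\tilde v = (BN(v_1-N), Bv_2, Bv_3)$ (Jacobian $B^3 N$) that maps $\supp g = F_{Ne_1}$ onto $B_1$. Under this change, any integral $\int g(v)\, \psi(v)\, \dd v$ becomes $c \int \varphi(\tilde v)\, \psi(v(\tilde v))\, \dd \tilde v$, so the mass, energy, and entropy contributions of the bump $g$ are all controlled by $c = 1/(BN^5)$ against moments of the fixed profile $\varphi$. The assumption $B \geq N^6$ with $N \geq 2$ forces $c \leq 1/2048$, which is the source of all the slack in the claimed bounds.

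For the mass, $\int g \, \dd v = c \int \varphi \, \dd \tilde v = c \leq 1$, and hence $1 = \int f \, \dd v \leq \int h \, \dd v \leq 1 + c \leq 2 \leq 9$. For the energy, on $\supp g$ one has $v_1 \in [N - 1/(BN), N + 1/(BN)]$ and $v_2^2 + v_3^2 \leq 2/B^2$, so $|v|^2 \lesssim N^2$; thus $\int g|v|^2 \, \dd v \lesssim c N^2 = 1/(BN^3) \leq 1$, and together with $\int f |v|^2 \, \dd v = 3$ this gives $\int h|v|^2 \, \dd v \leq 9$.

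The substantive step is the entropy. I would split $\int h \log h \, \dd v = \int_{E_f} f \log f \, \dd v + \int_{E_g} g \log g \, \dd v$. On $E_f$ the integrand is pointwise $\leq 0$ because $f \leq (2\pi)^{-3/2} < 1$ everywhere, so that piece may simply be dropped for an upper bound. On $E_g$, I would extend the integral to $\supp g$ (noting $g \log g \leq 0$ on the extra region, since $g \leq f < 1$ there) and factor $\log g(v) = (2\log B - 4\log N) + \log \varphi(\tilde v)$. Applying the change of variables yields
\[
\int_{E_g} g \log g \, \dd v \leq c(2\log B - 4\log N) + c \int_{\R^3} \varphi |\log \varphi| \, \dd \tilde v \leq 2c \log B + c \beta_0.
\]
Since $\log B / B$ is bounded by a universal constant and $c = 1/(BN^5)$, the term $2c \log B$ is negligibly small and $c \beta_0 \leq \beta_0$, so the total is well within $10 + \beta_0$.

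The main obstacle is sign bookkeeping in the entropy. Both $f \log f$ and $g \log g$ can have either sign, and $g$ is so concentrated that $g \log g > 0$ on most of its support while $g \log g < 0$ in a thin region near the boundary where $\varphi$ vanishes. One must carefully discard only the negative contributions ($\int_{E_f} f \log f$ and the boundary region of $\supp g$ where $g < 1$), and pass from $\log \varphi$ to $|\log \varphi|$ only at the final step. After this setup the remainder is a routine change-of-variables computation driven by the Jacobian $B^3 N$.
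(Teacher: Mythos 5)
Your proof is correct and takes essentially the same approach as the paper: bound $h \leq f+g$ from above and $h \geq f$ from below, then use the change of variables $v \mapsto \tilde v$ (Jacobian $B^3N$) to collapse every $g$-integral to $c$ times a fixed integral against $\varphi$. For the entropy, the paper instead uses the blunt pointwise bound $h\log h \leq f|\log f| + g|\log g|$ and estimates each term separately; your sharper sign-based bookkeeping on $E_f$ and on $\supp g$ is also valid and yields a smaller constant, but both fit comfortably under $10+\beta_0$.
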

\begin{proof}
We compute the mass and energy of $h$. Recall
\[
\int_{\R^3} f(v) \dd v =1, \int_{\R^3} f(v)|v|^2 \dd v =3. 
\]
On the other hand, by a straight forward computation,
\[
\int_{\R^3} g(v)\dd v =c\int_{\R^3} \varphi(\tilde{v}) \dd\tilde{v} =c\leq 1,
\]
\[
\int_{\R^3} g(v)|v|^2 \dd v =\int_{\R^3} g(v)(|v-Ne_1|^2+|Ne_1|^2) \dd v  \leq \frac{c}{B^2}\left(2+\frac{1}{N^2}\right)  \int_{B_1} \varphi (\tilde{v}) \dd \tilde{v} +cN^2 \leq 4.
\]
Therefore,
\[
\int_{\R^3} h(v) (1+|v|^2) \dd v \leq \int_{\R^3} (f(v)+g(v))(1+|v|^2)\dd v \leq 9,
\]
\[
\int_{\R^3}h(v) \dd v \geq \int_{\R^3}f(v) \dd v =1.
\]
We compute the entropy of $h$. Recall
\[
\int_{\R^3} f(v) |\log f(v)| \dd v \leq \int_{\R^3} f(v) (\frac{|v|^2}{2}+\frac{3}{2}\log(2\pi)) \dd v =\frac{3}{2}\log (2\pi) +\frac{3}{2}.
\]
On the other hand,
\[
\int_{\R^3} g(v)|\log g(v)| \dd v \leq c  |\log (cB^3 N)|+ c\int_{\R^3}\varphi(\tilde{v})  |\log \varphi(\tilde{v})| \dd v=\frac{ \log(B^2/N^4)}{BN^5}+c\beta_0 \leq 2+\beta_0. 
\]
Therefore,
\[
\int_{\R^3} h(v) \log h(v) \dd v  \leq \int_{\R^3} (g(v)|\log g(v)|+f(v)|\log f(v)|) \dd v \leq 10+\beta_0.
\]
\end{proof}
\begin{remark}
    Since hydrodynamic quantities of $h$ are bounded by absolute constants in Proposition \ref{prop: hydrodynamic}, the notation $Y \gtrsim Z$ means $Y\geq CZ$ for some absolute constant $C>0.$ We also emphasize that $C$ does not depend on the parameters $B$ and $N$.

\end{remark}

\subsection{The upper bound of the entropy dissipation of $h$}
We estimate the upper bound of $D(h)$
 \begin{equation}\label{e: Dissipation}
 D(h)=\frac{1}{2}\iint_{\R^3 \times \R^3} a_{ij}(v-w)h(v) h(w) \partial_i( \log h(v)-\log h(w))\partial_j ( \log h(v) -\log h (w)) \dd v \dd w.
 \end{equation}
Recall $h(v)=f(v)$ on $E_f$ and $h(v)=g(v)$ on $E_g$. Since $a_{ij}(v-w) \partial_i(\log f(v)-\log f(w))=0$, we have
\begin{align}
    D(h) \leq & \iint_{E_f \times E_g} a_{ij}(v-w)f(v)g(w) \partial_i( \log f(v)-\log g(w))\partial_j ( \log f(v) -\log g (w)) \dd v \dd w\\  
    & +\frac{1}{2} \iint_{E_g \times E_g} a_{ij}(v-w)  g(v)g(w)  \partial_i( \log g(v) -\log g(w)) \partial_j (\log g(v)- \log g(w))    \dd v \dd w\\
     \leq & \iint_{\R^3 \times E_g} a_{ij}(v-w) f(v) g(w) \partial_i (\log f(v)-\log g(w))\partial_j  (\log f(v)-\log g(w))\dd v \dd w\\
    &+\frac{1}{2}\iint_{E_g \times E_g} a_{ij}(v-w)  g(v)g(w)  \partial_i( \log g(v) - \log g(w)) \partial_j ( \log g(v)- \log g(w))    \dd v \dd w.
\end{align}
by the symmetry. Note that we used that the integrand is nonnegative in the second inequality. Since $a_{ij}$ is a positive definite matrix, it follows that
    \begin{align}
    D(h) & \leq  2 \iint_{\R^3 \times E_g} a_{ij}(v-w) f(v) g(w) \left(\partial_i \log f(v) \partial_j \log f(v)+\partial_i \log  g(w) \partial_j \log g(w) \right)\dd v \dd w\\
   & \quad +2\iint_{E_g \times E_g} a_{ij}(v-w)  g(v)g(w)  \partial_i \log g(v) \partial_j \log g(v)   \dd v \dd w.
    \end{align}
It remains to bound the following three terms:
\begin{equation} \label{e: enemy 1}
I_1:=\iint_{\R^3 \times E_g} a_{ij}(v-w)f(v)g(w) \partial_i \log f(v) \partial_j \log f(v) \dd v \dd w,
\end{equation}
\begin{equation}\label{e: enemy 2}
I_2:=\iint_{\R^3 \times E_g}   a_{ij}(v-w)f(v)g(w)  \partial_i \log  g(w) \partial_j \log g(w) \dd v \dd w,
\end{equation}
\begin{equation}\label{e: enemy 3}
I_3:=\iint_{E_g\times E_g} a_{ij}(v-w) g(v) g(w)  \partial_i \log g(v) \partial_j \log g(v)  \dd v \dd w.
\end{equation}
It turns out that $I_2$ is the dominating term, so we bound $I_1,I_3$ first.
To proceed, we need some estimates for $f(v)$ and $g(v)$.
\begin{lemma} \label{lem: 1}
For $w\in\R^3$,
\begin{equation}
 \int_{\R^3} \frac{f(v)|v|^2}{|v-w|} \dd v  \lesssim \frac{1}{|w|}.
\end{equation}
\end{lemma}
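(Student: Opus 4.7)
The plan is to view the left-hand side as essentially the Newton potential of the Schwartz-class function $\mu(v) := f(v)|v|^2$. Because $\mu$ has finite total mass $\int_{\R^3}\mu\,\dd v = 3$ and decays like a Gaussian, one expects $\int \mu(v)/|v-w|\,\dd v$ to behave like $3/|w|$ at infinity and to be uniformly bounded for $|w|$ near $0$ (the singularity of $|v-w|^{-1}$ being integrable in $\R^3$). Since $1/|w|$ blows up as $|w|\to 0$, the regimes of small and large $|w|$ can be handled separately.

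First, I would dispatch the regime $|w|\le 1$, in which $1/|w|\ge 1$, by producing only a uniform upper bound on the left-hand side. Splitting into $\{|v-w|\le 1\}$ and $\{|v-w|>1\}$: on the first piece I use $\norm{\mu}_{L^\infty(\R^3)}<\infty$ together with the elementary 3D computation $\int_{B_1}|u|^{-1}\,\dd u = 2\pi$; on the second piece I use $|v-w|^{-1}\le 1$ together with the moment bound $\int \mu = 3$. This yields a bound by an absolute constant, which is $\lesssim 1/|w|$ in this regime.

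For the main regime $|w|\ge 1$, I would partition the $v$-integration into $\{|v|\le |w|/2\}$ and $\{|v|>|w|/2\}$. On the near set, $|v-w|\ge |w|/2$, so this contribution is bounded by $\frac{2}{|w|}\int \mu(v)\,\dd v = 6/|w|$, which is the main term producing the claimed asymptotic. On the far set, Gaussian suppression takes over: the elementary inequality $x^2 e^{-x^2/4}\le C$ on $[0,\infty)$ gives the pointwise bound $\mu(v)\lesssim e^{-|v|^2/4}$, and on $\{|v|>|w|/2\}$ this is dominated by $e^{-|w|^2/32}e^{-|v|^2/8}$. Integrating against $|v-w|^{-1}$ (splitting again by $|v-w|\le 1$ versus $>1$) gives a uniform constant, so the full contribution from the far set is $\lesssim e^{-|w|^2/32}$, which is trivially $\lesssim 1/|w|$ for $|w|\ge 1$.

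No serious obstacle is anticipated; this is a standard Newton-potential-type estimate. The only spot that deserves a moment of care is the absorption of the polynomial factor $|v|^2$ into the Gaussian tail on the far set, but that is exactly the inequality $x^2 e^{-x^2/2}\le C e^{-x^2/4}$. Once that is in place, the constants should fall out cleanly.
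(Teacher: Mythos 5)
Your proof is correct and uses essentially the same two ingredients as the paper: Gaussian suppression on the region where the singularity of $|v-w|^{-1}$ lives, and the finite second moment $\int f|v|^2\,\dd v = 3$ on the region where $|v-w|\gtrsim|w|$. The only cosmetic difference is that the paper splits directly on $|v-w|\lessgtr|w|/2$ (and its bound $e^{-|w|^2/8}|w|^4\lesssim|w|^{-1}$ already holds for all $|w|$, so no separate small-$|w|$ case is needed), whereas you split on $|v|\lessgtr|w|/2$ after first disposing of $|w|\le 1$; since $\{|v|\le|w|/2\}\subset\{|v-w|\ge|w|/2\}$, the two decompositions are interchangeable here.
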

\begin{proof}
    Divide $\R^3$ into $|v-w|<\frac{|w|}{2}$ and $|v-w|\geq \frac{|w|}{2}$. Then,
    \begin{equation*}
         \int_{|v-w|<|w|/2 } \frac{f(v)|v|^2}{|v-w|} \dd v  \lesssim \exp\left(-\frac{|w|^2}{8}\right)|w|^2\int_{|v-w|<|w|/2} \frac{1}{|v-w|} \dd v \lesssim \frac{1}{|w|},
    \end{equation*}
    \begin{equation*}
         \int_{|v-w|\geq |w|/2} \frac{f(v)|v|^2}{|v-w|} \dd v  \leq  \frac{2}{|w|} \int_{\R^3} f(v)|v|^2 \dd v =\frac{6}{|w|}.
    \end{equation*}
\end{proof}
\begin{lemma} \label{lem: 2}
    For $v\in E_g$, 
    \begin{equation}
        \int_{E_g} \frac{g(w)}{|v-w|} \dd w \leq cBN=\frac{1}{N^4}.
    \end{equation}
\end{lemma}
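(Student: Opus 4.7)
\textbf{Proof proposal for Lemma \ref{lem: 2}.} The plan is to rescale the integral to the $\tilde{w}$-variables (where $g$ becomes a fixed smooth bump on $B_1$) and exploit the anisotropic scaling of the distance. First, since $E_g \subset \mathrm{supp}(g) = F_{Ne_1}$, under the change of variables $\tilde{w} = (BN(w_1-N), Bw_2, Bw_3)$ the set $E_g$ is mapped into $B_1$, the Jacobian is $|\det| = B^3 N$, and
\[
g(w)\,\dd w \;=\; cB^3N\,\varphi(\tilde{w}) \cdot \frac{\dd \tilde{w}}{B^3N} \;=\; c\,\varphi(\tilde{w})\,\dd \tilde{w}.
\]
For the same reason $v \in E_g$ corresponds to $\tilde{v} \in B_1$.

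Next I estimate $|v-w|$ from below in terms of $|\tilde{v}-\tilde{w}|$. Writing the displacement componentwise one has
\[
|v-w|^2 \;=\; \frac{(\tilde{v}_1 - \tilde{w}_1)^2}{(BN)^2} + \frac{(\tilde{v}_2 - \tilde{w}_2)^2 + (\tilde{v}_3 - \tilde{w}_3)^2}{B^2},
\]
and since $N \geq 1$ we have $1/B^2 \geq 1/(BN)^2$, hence
\[
|v-w| \;\geq\; \frac{|\tilde{v}-\tilde{w}|}{BN}, \qquad \text{i.e.,} \qquad \frac{1}{|v-w|} \;\leq\; \frac{BN}{|\tilde{v}-\tilde{w}|}.
\]

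Combining these two ingredients yields
\[
\int_{E_g} \frac{g(w)}{|v-w|}\,\dd w \;\leq\; c\,BN \int_{B_1} \frac{\varphi(\tilde{w})}{|\tilde{v}-\tilde{w}|}\,\dd \tilde{w}.
\]
The integral on the right is the Newtonian potential of the fixed smooth bump $\varphi$ evaluated at $\tilde{v} \in B_1$; since $\varphi$ is uniformly bounded by $\alpha_0$ and supported in $B_1$, this is controlled by $\alpha_0 \int_{B_2} |z|^{-1} \dd z$, an absolute constant. Choosing the normalization of $\varphi$ (or absorbing the harmless constant) gives the desired bound $cBN = 1/N^4$, using $c = 1/(BN^5)$.

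The main (mild) obstacle is the asymmetric scaling of the distance: one must be careful to use $1/B \geq 1/(BN)$ rather than the reverse, so that after the change of variables the singular factor $|\tilde{v}-\tilde{w}|^{-1}$ is integrable against the bounded compactly supported density $\varphi$. Everything else is bookkeeping around the Jacobian and the $L^\infty$ bound on $\varphi$.
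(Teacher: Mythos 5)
Your proof is correct and follows the same route as the paper: change variables to $\tilde{w}$ so that $g$ becomes the fixed bump $\varphi$ on $B_1$, use the anisotropic bound $|\tilde{v}-\tilde{w}|\leq BN|v-w|$ (which you verify componentwise, the paper simply asserts it), and note that the Newtonian potential of $\varphi$ is an absolute constant. The only difference is that you spell out the elementary inequality behind $|\tilde{v}-\tilde{w}|\leq BN|v-w|$; both you and the paper end with $\lesssim cBN$ rather than the literal $\leq cBN$ in the lemma statement, which is harmless given the $\lesssim$ convention in force.
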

\begin{proof}
    We have $\tilde{v} \in B_1$. It follows that 
    \[
    \int_{E_g} \frac{g(w)}{|v-w| } \dd w \leq  c\int_{B_1} \frac{\varphi(\tilde{w})}{|v-w|} \dd \tilde{w} \leq cBN\int_{B_1} \frac{\varphi(\tilde{w})}{|\tilde{v}-\tilde{w}|} \dd \tilde{w} \lesssim cBN
    \]
    since $|\tilde{v}-\tilde{w}|\leq BN |v-w|$.
\end{proof}
\begin{lemma} \label{lem: 3}
    \begin{equation}
        \int_{E_g} g(v) |\grad \log g(v)|^2 \dd v \lesssim cB^2N^2 =\frac{B}{N^3}.
    \end{equation}
   
\end{lemma}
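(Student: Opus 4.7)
The plan is to reduce the integral to one involving only the universal bump function $\varphi$ via the anisotropic change of variables $\tilde v = (BN(v_1-N), Bv_2, Bv_3)$, then observe that the remaining integral is a harmless absolute constant.

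First, I will use the chain rule on $\log g(v) = \log(cB^3N) + \log \varphi(\tilde v)$. Because the scaling factor is $BN$ in the $v_1$-direction and $B$ in the $v_2,v_3$-directions, the partial derivatives are
\[
\partial_{v_1} \log g(v) = BN\,(\partial_1 \log\varphi)(\tilde v), \quad \partial_{v_k} \log g(v) = B\,(\partial_k \log\varphi)(\tilde v) \text{ for } k=2,3.
\]
Since $BN \ge B$, this gives the crude bound $|\grad \log g(v)|^2 \le B^2N^2\, |\grad \log \varphi(\tilde v)|^2$, which suffices for the target bound (we are not trying to capture anisotropy here, only to get the overall size right).

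Next, I will change variables. The support of $g$ is $F_{Ne_1}$, and $E_g \subset F_{Ne_1}$ corresponds to a subset of the unit ball $B_1$ under $v \mapsto \tilde v$; the Jacobian is $|\det| = B^3N$, so $\dd v = (B^3N)^{-1}\dd \tilde v$. Combining with $g(v) = cB^3N\, \varphi(\tilde v)$ produces a cancellation of the factor $B^3N$, giving
\[
\int_{E_g} g(v)\,|\grad \log g(v)|^2 \dd v \le cB^2N^2 \int_{B_1} \varphi(\tilde v)\,|\grad \log \varphi(\tilde v)|^2 \dd \tilde v = cB^2N^2 \int_{B_1} \frac{|\grad \varphi(\tilde v)|^2}{\varphi(\tilde v)} \dd \tilde v.
\]

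Finally, I will check that the last integral is an absolute constant depending only on $\varphi$. For the explicit choice $\varphi(u) = \alpha_0 \exp(-1/(1-|u|^2))$ on $B_1$, one has $\grad \log \varphi(u) = -2u/(1-|u|^2)^2$, so
\[
\varphi(u)\,|\grad \log \varphi(u)|^2 = \alpha_0\,\frac{4|u|^2}{(1-|u|^2)^4}\,\exp\!\Big(\!-\frac{1}{1-|u|^2}\Big),
\]
which is bounded on $B_1$ thanks to the exponential weight dominating the polynomial singularity as $|u|\to 1$. Hence the integral is a finite constant absorbed into $\lesssim$, and recalling $c = 1/(BN^5)$ yields $cB^2N^2 = B/N^3$, completing the proof. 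The only subtle point is checking finiteness of $\int \varphi^{-1}|\grad\varphi|^2$ at the boundary of the support, but for the explicit standard mollifier this is immediate from the exponential decay.
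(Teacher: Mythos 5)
Your proof is correct and follows essentially the same route as the paper: bound $|\grad\log g|^2$ by $B^2N^2|\grad\log\varphi(\tilde v)|^2$ via the anisotropic chain rule, change variables to cancel the Jacobian against the $B^3N$ factor in $g$, and invoke finiteness of a universal integral over $B_1$ involving $\varphi$. The only cosmetic difference is that you observe $\varphi|\grad\log\varphi|^2$ is actually \emph{bounded} on $B_1$, whereas the paper merely notes integrability of $\varphi(\tilde v)(1-|\tilde v|^2)^{-4}$; both are immediate from the explicit form of the bump function.
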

\begin{proof}
    Recall that $\varphi(v)=\alpha_0 \exp(-(1-|v|^2)^{-1}) \mathds{1}_{|v| <1}$.
    We have
\[
|\grad \log g(v)|^2 =B^2| (N,1,1) \cdot \grad \log \varphi(\tilde{v})|^2 \lesssim B^2N^2  \frac{1}{(1-|\tilde{v}|^2)^4}.
\]
It follows that 
 \[
    \int_{E_g} g(v)|\grad \log g(v)|^2 \dd v \lesssim B^2N^2 \int_{E_g} \frac{g(v)}{(1-|\tilde{v}|^2)^4} \dd v\leq cB^2N^2 \int_{B_1} \frac{\varphi(\tilde{v}) }{(1-|\tilde{v}|^2)^4} \dd \tilde{v} \lesssim cB^2N^2
    \]
    since $\varphi(\tilde{v})(1-|\tilde{v}|^2)^{-4}$ is integrable. 
\end{proof}

\begin{prop}[Estimate for $I_1$] \label{prop: 1}
\begin{equation}
    I_1 \lesssim \frac{c}{N}=\frac{1}{BN^6}.
\end{equation}
\end{prop}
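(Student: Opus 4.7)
The plan is to reduce $I_1$ to an application of Lemma \ref{lem: 1}, exploiting the explicit Gaussian structure of $f$ and the fact that the support $E_g$ of the perturbation lives far from the origin.

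First I would unwind the integrand using $f(v) = (2\pi)^{-3/2} e^{-|v|^2/2}$, so that $\partial_i \log f(v) = -v_i$ and therefore
\begin{equation}
a_{ij}(v-w)\,\partial_i \log f(v)\,\partial_j \log f(v) = a_{ij}(v-w)\,v_i v_j.
\end{equation}
Since $a_{ij}(u) = |u|^{-1}(\delta_{ij} - u_i u_j/|u|^2)$ is positive semidefinite with operator norm $|u|^{-1}$, a pointwise bound gives
\begin{equation}
a_{ij}(v-w)\,v_i v_j \;\leq\; \frac{|v|^2}{|v-w|}.
\end{equation}

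Next I would swap the order of integration (Fubini is valid since everything is nonnegative) and apply Lemma \ref{lem: 1}, which provides exactly the right one-variable bound:
\begin{equation}
\int_{\R^3} \frac{f(v)\,|v|^2}{|v-w|}\,\dd v \;\lesssim\; \frac{1}{|w|}.
\end{equation}
This reduces the double integral to
\begin{equation}
I_1 \;\lesssim\; \int_{E_g} \frac{g(w)}{|w|}\,\dd w.
\end{equation}

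To finish, I would use the geometric fact that $E_g \subset F_{Ne_1}$, so every $w \in E_g$ satisfies $|w| \approx N$ (since $N \geq 2$ and the ellipsoid $F_{Ne_1}$ has radii at most $1$ and is centered at $Ne_1$). Pulling $|w|^{-1} \lesssim N^{-1}$ out of the integral and using $\int_{E_g} g(w)\,\dd w \leq \int_{\R^3} g(w)\,\dd w = c$ yields
\begin{equation}
I_1 \;\lesssim\; \frac{c}{N} \;=\; \frac{1}{BN^6},
\end{equation}
as claimed. There is no real obstacle here; the only thing to be slightly careful about is the pointwise upper bound on $a_{ij} v_i v_j$ and confirming that $|w| \gtrsim N$ on $F_{Ne_1}$, both of which are elementary.
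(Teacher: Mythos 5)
Your proof is correct and follows essentially the same route as the paper's: bound $a_{ij}(v-w)\partial_i\log f\,\partial_j\log f \leq |v|^2/|v-w|$, integrate in $v$ using Lemma \ref{lem: 1}, and then use $|w|\gtrsim N$ on $E_g\subset F_{Ne_1}$ together with $\int_{E_g} g \leq c$. Nothing essentially different here; the paper's proof is a compressed version of exactly this argument.
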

\begin{proof}
    Recall that 
    \[
a_{ij}(v) \leq \frac{1}{|v|}I.
    \]
    We replace $a_{ij}(v)$ with $\frac{1}{|v|} I$ and use Lemma \ref{lem: 1} to deduce
    \begin{equation}
        I_1 \leq \iint_{\R^3 \times E_g}  \frac{f(v)g(w)}{|v-w|}|v|^2 \dd v \dd w \lesssim \int_{E_g} \frac{g(w)}{|w|} \dd w \lesssim \frac{1}{N} \int_{E_g} g(w) \dd w \leq \frac{c}{N}.
    \end{equation}
\end{proof}
\begin{prop}[Estimate for $I_3$] \label{prop: 2}
\begin{equation}
I_3 \lesssim c^2 B^3 N^3=\frac{B}{N^7}.
\end{equation}
\end{prop}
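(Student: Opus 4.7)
The plan is a direct chain of bounds combining the trivial upper bound on $a_{ij}$ with the two preceding technical lemmas. Since the Coulomb matrix satisfies $a_{ij}(v) \leq |v|^{-1}\,\id$ (its eigenvalues are $0$ along $v$ and $|v|^{-1}$ on the orthogonal plane), the contraction against the rank-one tensor $\partial_i \log g(v)\,\partial_j\log g(v)$ yields
\[
a_{ij}(v-w)\,\partial_i\log g(v)\,\partial_j\log g(v) \;\leq\; \frac{|\grad \log g(v)|^2}{|v-w|}.
\]
Plugging this into \eqref{e: enemy 3} decouples the integrand in a useful way: the factor $|\grad \log g(v)|^2$ does not depend on $w$.

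Next I would carry out the $w$-integration first with $v \in E_g$ held fixed. The inner integral is precisely
\[
\int_{E_g} \frac{g(w)}{|v-w|}\,\dd w,
\]
which is exactly the quantity controlled by Lemma \ref{lem: 2} and bounded by $cBN$. What remains is a single integral over $v$:
\[
I_3 \;\lesssim\; cBN \int_{E_g} g(v)\,|\grad \log g(v)|^2\,\dd v.
\]
Now Lemma \ref{lem: 3} bounds the remaining integral by $cB^2N^2$, and multiplying the two estimates gives
\[
I_3 \;\lesssim\; (cBN)(cB^2N^2) \;=\; c^2 B^3 N^3.
\]
Finally, substituting $c = 1/(BN^5)$ converts $c^2B^3N^3$ into $B/N^7$, matching the desired bound.

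There is no real obstacle here beyond correctly identifying the order of integration. The only point worth being careful about is that Lemma \ref{lem: 2} requires $v\in E_g$ (so that $\tilde v \in B_1$), which is automatic because the outer integration variable $v$ in $I_3$ ranges precisely over $E_g$. Because of this the proof is essentially a one-line chain of inequalities once the two lemmas are in hand, and nothing beyond the existing notation is needed.
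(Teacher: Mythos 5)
Your proposal is correct and follows the same route as the paper: bound $a_{ij}(v-w)$ by $|v-w|^{-1}\,\id$, integrate in $w$ first via Lemma \ref{lem: 2} to get the factor $cBN$, then apply Lemma \ref{lem: 3} to the remaining $v$-integral to get $cB^2N^2$, and multiply. Nothing is missing.
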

\begin{proof}
We replace $a_{ij}(v)$ with $\frac{1}{|v|} I$ and obtain
\begin{equation}
     I_3  \leq  \iint_{ E_g \times E_g} \frac{g(v)g(w)}{|v-w|}|\grad \log g(v)|^2 \dd v \dd w .
\end{equation}
It follows from Lemmas \ref{lem: 2} and \ref{lem: 3} that
\begin{equation*}
    I_3 \leq \int_{E_g} g(v)|\grad \log g(v)|^2 \left(\int_{ E_g } \frac{g(w)}{|v-w|} \dd w \right) \dd v  \lesssim cBN \int_{E_g} g(v) |\grad \log g(v)|^2 \dd v \lesssim c^2B^3N^3.
\end{equation*}
\end{proof}
For the estimate for $I_2$, we need one more lemma that captures an anisotropy of $a_{ij}$.
\begin{lemma} \label{lem: 4}
For $w\in E_g$, 
\[
\diag(N,1,1)\left[\int_{B_{N/2}} a(w-v) f(v) \dd v\right] \diag(N,1,1) 
 \lesssim \frac{1}{N} I.
\]
\end{lemma}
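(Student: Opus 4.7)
The plan is to estimate the matrix $A := \int_{B_{N/2}} a(w-v) f(v) \dd v$ entry-by-entry, exploiting the fact that $w \in E_g$ forces $w$ to be very close to $Ne_1$. The support of $g$ requires $|BN(w_1 - N)|, |Bw_2|, |Bw_3| \leq 1$, so $|w_1 - N| \leq (BN)^{-1}$ and $|w_2|, |w_3| \leq B^{-1}$; since $B \geq N^6$, for $v\in B_{N/2}$ one has $(w-v)_1 \geq N/2$ and $|w-v| \geq N/2$.

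Next I would compute each entry of $A$ from the explicit formula $a_{ij}(u) = |u|^{-1}(\delta_{ij} - u_i u_j/|u|^2)$. The critical identity is
\[
a_{11}(w-v) = \frac{(w_2-v_2)^2+(w_3-v_3)^2}{|w-v|^3},
\]
whose numerator is bounded by $2(|v|^2 + B^{-2})$ since $w_\perp$ is negligible. Combined with $|w-v|^{-3} \lesssim N^{-3}$ and $\int f(v)|v|^2 \dd v \lesssim 1$, this yields $A_{11} \lesssim N^{-3}$. The off-diagonal entries $a_{1k}$ for $k\in\{2,3\}$ satisfy $|a_{1k}(w-v)| \leq |w_k-v_k|/|w-v|^2 \lesssim (|v_k|+B^{-1})/N^2$, so $|A_{1k}| \lesssim N^{-2}$. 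The remaining entries are controlled by the crude pointwise bound $a(u) \leq |u|^{-1} I$, which gives $A_{22}, A_{33} \lesssim N^{-1}$ and, more sharply from $|a_{23}| \leq |(w_2-v_2)(w_3-v_3)|/|w-v|^3$, the estimate $|A_{23}| \lesssim N^{-3}$.

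Conjugating by $\diag(N,1,1)$ multiplies the $(1,1)$ entry by $N^2$ and the $(1,k)$ entries by $N$. Thus $D := \diag(N,1,1) A \diag(N,1,1)$ satisfies
\[
D_{11} = N^2 A_{11} \lesssim N^{-1}, \quad D_{1k} = N A_{1k} \lesssim N^{-1} \;(k=2,3), \quad D_{k\ell} = A_{k\ell} \lesssim N^{-1} \;(k,\ell \in \{2,3\}).
\]
A symmetric $3 \times 3$ matrix with all entries bounded by $C N^{-1}$ has operator norm at most $3 C N^{-1}$ (by Gershgorin, say), so $D \lesssim N^{-1} I$ as claimed.

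The main subtlety is the treatment of the $(1,1)$ entry: naively bounding $a_{11}(u) \leq |u|^{-1}$ would give $A_{11} \lesssim N^{-1}$ and hence $D_{11} \lesssim N$, which is useless. The key point is that $a(u)$ annihilates the direction of $u$, and since $w-v$ nearly points along $e_1$ (the sine of the angle is of order $|v_\perp|/N$), the $(1,1)$ entry gains two extra factors of $N^{-1}$ from this squared sine. This is precisely the anisotropy of the Landau diffusion, and it is the same mechanism exploited in Lemma \ref{lem: eigenvalue}, applied here in reverse to produce an upper rather than a lower bound.
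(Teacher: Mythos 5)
Your proof is correct and follows essentially the same route as the paper: both exploit the anisotropy of $a$ via the explicit formula $a_{11}(u)=(u_2^2+u_3^2)/|u|^3$ to win two extra powers of $N^{-1}$ in the $e_1$ direction, which is exactly what survives conjugation by $\diag(N,1,1)$. The only cosmetic difference is that the paper simplifies to $w=Ne_1$ and bounds just the diagonal entries of $\int_{B_{N/2}}a(w-v)f(v)\,\dd v$, implicitly using that this matrix is positive semi-definite so its operator norm is controlled by the trace, whereas you keep a general $w\in E_g$, estimate the off-diagonal entries as well, and close with a Gershgorin argument.
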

\begin{proof}
    For simplicity, assume $w=Ne_1$.
    For $i=2,3$,
    \[
    e_i^t\left[\int_{B_{N/2}} a(Ne_1-v) f(v) \dd v\right]e_i \leq \int_{B_{N/2}} \frac{1}{|Ne_1-v|} f(v) \dd v \lesssim \frac{1}{N}.
    \]
    For $i=1$,
    \begin{align}
    e_1^t\left[\int_{B_{N/2}(Ne_1)} a(v) f(Ne_1-v) \dd v\right]e_1
    &= \int_{B_{N/2}(Ne_1)} \frac{v_2^2+v_3^2}{|v|^3} f(Ne_1-v) \dd v\\
    &\lesssim \frac{1}{N^3} \int_{B_{N/2}(Ne_1)}  |v-Ne_1|^2 f(Ne_1-v) \dd v\\
    & \leq \frac{1}{N^3} \int_{\R^3} |Ne_1-v|^2 f(Ne_1-v) \dd v =\frac{3}{N^3}.
    \end{align}
\end{proof}
\begin{prop}[Estimate for $I_2$] \label{prop: 3}
\begin{equation}
    I_2 \lesssim \frac{cB^2}{N}=\frac{B}{N^6}
\end{equation}
\end{prop}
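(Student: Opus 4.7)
The plan is to exploit the joint anisotropy of $\nabla \log g(w)$ and the averaged diffusion matrix: along $e_1$, $\nabla\log g$ is enhanced by a factor of $N$ while, by Lemma \ref{lem: 4}, the averaged $a$ against $f$ is suppressed by $1/N^2$, and these two scalings combine to yield the desired $1/N$ gain over the crude isotropic estimate. Concretely, the first step is to write
\[
\nabla \log g(w) = B\,\diag(N,1,1)\,\nabla \log \varphi(\tilde w) =: \diag(N,1,1)\,\xi(w),
\]
so that pulling the diagonal factors out of the quadratic form gives
\[
I_2 = \int_{E_g} g(w)\,\xi(w)^t\, M(w)\,\xi(w) \dd w, \qquad M(w) := \diag(N,1,1)\!\left[\int_{\R^3} a(v-w) f(v)\dd v\right]\!\diag(N,1,1).
\]

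Next I would split the $v$-integral defining $M(w)$ into $v\in B_{N/2}$ and $v\in B_{N/2}^c$. On $B_{N/2}$, Lemma \ref{lem: 4} applies directly and yields $M_1(w) \lesssim N^{-1} I$, which is the anisotropic gain we want. On $B_{N/2}^c$, the Gaussian factor $f(v)\lesssim e^{-|v|^2/2}$ obeys $f(v)\leq e^{-N^2/8}$, and splitting the $v-w$ integral into $|v-w|<1$ and $|v-w|\geq 1$ (using that $|w|\lesssim N$ for $w\in E_g$) shows that $\int_{B_{N/2}^c} f(v)/|v-w|\dd v$ is super-exponentially small in $N$; the outer $N^2$ from the $\diag(N,1,1)$ factors cannot defeat this, so the tail is negligible compared to the target $B/N^6\geq 1$.

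It then remains to evaluate $\int_{E_g} g(w)\,|\xi(w)|^2\dd w$. Changing variables to $\tilde w$ (Jacobian $(B^3N)^{-1}$), the integral becomes
\[
c B^2 \int_{\R^3} \varphi(\tilde w)\,|\nabla \log \varphi(\tilde w)|^2 \dd\tilde w,
\]
which is $\lesssim c B^2$ because the exponential decay of $\varphi$ near $\partial B_1$ dominates the $(1-|\tilde w|^2)^{-4}$ blow-up of $|\nabla\log\varphi|^2$. Combining the two bounds gives $I_2 \lesssim N^{-1}\cdot cB^2 = B/N^6$, as desired. The main obstacle is the invocation of Lemma \ref{lem: 4}: replacing it by the isotropic bound $a(v-w)\leq |v-w|^{-1}I$ together with $|\nabla\log g|^2\lesssim B^2N^2(1-|\tilde w|^2)^{-4}$ would only give $I_2\lesssim B/N^4$, which is too weak by a factor $N^2$. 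In other words, the sharp estimate rests entirely on the cancellation between the $e_1$-enhancement of $\nabla\log g$ and the $e_1$-suppression of the averaged diffusion.
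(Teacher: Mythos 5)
Your proposal is correct and follows essentially the same route as the paper: both split the $v$-integral into $B_{N/2}$ (handled by the anisotropic Lemma~\ref{lem: 4} after factoring $\nabla\log g(w) = B\,\diag(N,1,1)\,\nabla\log\varphi(\tilde w)$) and $B_{N/2}^c$ (handled by Gaussian decay, yielding a super-exponentially small contribution). The only cosmetic difference is the sub-split used for the tail (you use $|v-w|\lessgtr 1$, the paper uses $B_{2N}\setminus B_{N/2}$ versus $B_{2N}^c$), which makes no material difference.
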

\begin{proof}
    Divide the domain of $v$ into $B_{N/2}$ and its complement.
    \begin{align*}
        I_2
        &\leq\iint_{B_{N/2}^c \times E_g} |\grad \log g(w)|^2  \frac{f(v)g(w)}{|v-w|} \dd v \dd w    +  \iint_{B_{N/2} \times E_g} (\grad \log g(w))^t a(v-w) (\grad \log g(w)) f(v)g(w) \dd v \dd w
    \end{align*}
        For $w\in E_g$,
        \begin{align}
            \int_{B^c_{N/2}} \frac{f(v)}{|v-w|} \dd v
            &= \int_{B_{2N}\setminus B_{N/2}} \frac{f(v)}{|v-w|} \dd v+ \int_{B^c_{2N}} \frac{f(v)}{|v-w|} \dd v\\
            &\lesssim \exp\left(-\frac{N^2}{16}\right) \int_{B_{2N}\setminus B_{N/2}} \frac{1}{|v-w|} \dd v +\frac{1}{N} \int_{B_{2N}^c} f(v) \dd v\\
            &\lesssim N^2 \exp\left(-\frac{N^2}{16}\right) +\frac{1}{N} \exp\left(-\frac{N^2}{2}\right) \lesssim \exp\left(-\frac{N^2}{32}\right).
        \end{align}
        Applying Lemma \ref{lem: 3}, we have
        \begin{align}
        \iint_{B_{N/2}^c \times E_g} |\grad \log g(w)|^2  \frac{f(v)g(w)}{|v-w|} \dd v \dd w
        &\lesssim \exp\left(-\frac{N^2}{32}\right) \int_E  |\grad \log g(w)|^2  g(w)  \dd w
         \lesssim \exp\left(-\frac{N^2}{32}\right) cB^2N^2.
        \end{align}
   On the other hand, since
    \begin{equation}
        \grad \log g(w)=\diag(N,1,1) \grad \log \varphi(\tilde{w}),
    \end{equation}
    we obtain 
    \begin{align}
        & \iint_{B_{N/2} \times E_g} (\grad \log g(w))^t a(v-w) (\grad \log g(w)) f(v)g(w) \dd v \dd w\\
         &\lesssim  B^2\int_{E_g} \frac{1}{N}  |\grad \log \varphi(\tilde{w})|^2  g(w) \dd w\\
       & \leq  \frac{cB^2}{N} \int_{B_1} |\grad \log \varphi(\tilde{w})|^2 \varphi(\tilde{w}) \dd \tilde{w}
    \end{align}
     by Lemma \ref{lem: 4}.
    Hence, we obtain
    \[
    I_2 \lesssim \exp\left(-\frac{N^2}{32}\right) cB^2N^2 + \frac{cB^2}{N} \int_{B_1} |\grad \log \varphi(\tilde{w})|^2 \varphi(\tilde{w}) \dd \tilde{w} \lesssim \frac{cB^2}{N}
    \]
    since $|\grad \log \varphi|^2 \varphi$ is integrable. 
\end{proof}
By Propositions \ref{prop: 1}, \ref{prop: 2}, and \ref{prop: 3}, we derive the upper bound of $D(h)$. 
\begin{cor} 
    We have the following upper bound of $D(h)$ in terms of $B$ and $N$.
    \begin{equation}
    D(h) \lesssim \frac{1}{BN^6}+\frac{B}{N^7}+ \frac{B}{N^6} \lesssim \frac{B}{N^6}.
    \end{equation}
\end{cor}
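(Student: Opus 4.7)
The plan is to directly combine the three propositions proved above with the decomposition of $D(h)$ derived at the beginning of this subsection. Recall that starting from the dissipation formula \eqref{e: Dissipation}, using that $h=f$ on $E_f$ together with the identity $a_{ij}(v-w)\partial_i(\log f(v)-\log f(w))=0$, that $h=g$ on $E_g$, and the Cauchy--Schwarz inequality applied to the cross-terms, we already obtained
$$D(h) \leq 2I_1 + 2I_2 + 2I_3,$$
where $I_1, I_2, I_3$ are the three integrals defined in \eqref{e: enemy 1}, \eqref{e: enemy 2}, and \eqref{e: enemy 3}. So nothing new has to be set up; the corollary is a one-line consequence of the previous three propositions.

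First I would substitute the bounds from Propositions \ref{prop: 1}, \ref{prop: 2}, and \ref{prop: 3}, which yields
$$D(h) \lesssim \frac{1}{BN^6} + \frac{B}{N^6} + \frac{B}{N^7}.$$
This is exactly the first inequality in the statement. Next I would collapse the three terms using the standing assumption $B \geq N^6$ together with $N\geq 2$: since $B \geq N^6 \geq 1$, we have $\frac{1}{BN^6} \leq \frac{B}{N^6}$; and since $N\geq 1$, we have $\frac{B}{N^7}\leq \frac{B}{N^6}$. Hence all three terms are absorbed into $\frac{B}{N^6}$, giving the second inequality.

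There is essentially no obstacle at this step; the real work was already carried out in the bookkeeping for $I_1$, $I_2$, and $I_3$. The conceptual content is simply that $I_2$ dominates, reflecting the fact that the main contribution to the dissipation of $h=\max(f,g)$ comes from the anisotropic diffusion of the Gaussian background $f$ against the sharply concentrated bump $g$, rather than from the self-interaction of $g$ (which is controlled by Proposition \ref{prop: 2}) or the Gaussian's own flux seen from the bump region (Proposition \ref{prop: 1}). In the sequel this upper bound $D(h)\lesssim B/N^6$ will be tested against any putative lower bound of the form $D(h)+1 \geq c_2 \norm{h}_{L^p_{-q}}$ to extract the admissible values of $p$ and $q$.
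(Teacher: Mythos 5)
Your proof is correct and follows the same route as the paper: plug Propositions \ref{prop: 1}, \ref{prop: 2}, \ref{prop: 3} into the decomposition $D(h)\leq 2I_1+2I_2+2I_3$ already established, then absorb the smaller terms using $B\geq N^6\geq 1$. The paper states this corollary with exactly that one-line justification and no further argument.
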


\subsection{The lower bound of $h$}
We estimate the lower bound of $h$ in terms of $L^p_{-q}$ norm for $p\geq 1$ and $q\in \R$.
\begin{align*}
    \norm{h}_{L^p_{-q}}^p  \geq \int_{E} g(v)^p \Jap{v}^{-pq}\dd v \gtrsim N^{-pq} \int_{E} g(v)^p \dd v\geq \frac{(cB^3N)^p}{B^3 N} N^{-pq} \int_{B_1} \varphi(\tilde{v})^p \dd \tilde{v} \gtrsim c^p B^{3p-3}N^{p-pq-1}.
\end{align*}
Therefore,
\begin{equation} \label{e: optimal1}
\norm{h}_{L^p_{-q}} \gtrsim cB^{3-\frac{3}{p}} N^{1-q-\frac{1}{p}} =B^{2-\frac{3}{p}} N^{-4-q-\frac{1}{p}}.
\end{equation}
In particular, if $p=3$, then
\begin{equation} \label{e: optimal2}
\norm{h}_{L^3_{-q}} \gtrsim BN^{-q-\frac{13}{3}}.
\end{equation}

The following two corollaries prove Theorem \ref{thm2}.
\begin{cor}
    Let $p\geq 1$ and $q\in \R$. If there exists a constant $\kappa_1>0$ such that
    \[
    D(h) + 1 \geq \kappa_1 \norm{h}_{L^p_{-q}(\R^3)}
    \]
    holds for every solution $h$ of \eqref{e: Landau} which satisfies \eqref{e: hydronamic1}, \eqref{e: hydronamic2}, and \eqref{e: hydronamic3}, then $p\leq 3$. 
\end{cor}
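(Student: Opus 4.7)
The plan is to derive a contradiction by feeding the explicit family $h = h_{B,N} := \max(f, g_{B,N})$ constructed above into the hypothesized inequality and choosing $(B, N)$ so that its right-hand side eventually outruns its left. By Proposition \ref{prop: hydrodynamic}, every such $h_{B,N}$ satisfies the hydrodynamic bounds \eqref{e: hydronamic1}--\eqref{e: hydronamic3} with absolute constants, uniformly in $B \geq N^6$ and $N \geq 2$, so the assumed inequality $D(h) + 1 \geq \kappa_1 \norm{h}_{L^p_{-q}(\R^3)}$ must hold throughout the entire family. It therefore suffices to exhibit a sequence along which $\norm{h}_{L^p_{-q}} / (D(h) + 1) \to \infty$.

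The ingredients are already in place: the corollary after Proposition \ref{prop: 3} gives $D(h) + 1 \lesssim B/N^6 + 1$, while \eqref{e: optimal1} gives $\norm{h}_{L^p_{-q}} \gtrsim B^{2 - 3/p} N^{-4 - q - 1/p}$. Setting $B = N^\alpha$ with $\alpha \geq 6$ to be chosen, these become
\[
D(h) + 1 \lesssim N^{\alpha - 6} + 1, \qquad \norm{h}_{L^p_{-q}} \gtrsim N^{\alpha(2 - 3/p) - 4 - q - 1/p}.
\]
Comparing dominant powers of $N$, the hypothesis forces the necessary condition $\alpha - 6 \geq \alpha(2 - 3/p) - 4 - q - 1/p$, which rearranges to $\alpha(1 - 3/p) \leq q + 1/p - 2$.

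The decisive observation is that $1 - 3/p > 0$ precisely when $p > 3$, so in that regime the left-hand side of the rearranged condition grows linearly and unboundedly with $\alpha$, while the right-hand side is a fixed constant. Picking $\alpha$ larger than both $6$ and $(q + 1/p - 2)/(1 - 3/p)$ therefore makes the $N$-exponent of $\norm{h}_{L^p_{-q}}$ strictly exceed that of $D(h) + 1$; sending $N \to \infty$ then delivers the divergent ratio, contradicting the hypothesis and forcing $p \leq 3$. I expect no serious obstacle here: no new analytic estimates are required, and the only care needed is the algebraic bookkeeping of exponents together with the verification — immediate from Proposition \ref{prop: hydrodynamic} — that the scaled test functions $h_{B,N}$ remain admissible for all $(B, N)$ in the chosen range.
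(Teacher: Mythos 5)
Your argument is correct and uses essentially the same approach as the paper: both arguments feed the family $h_{B,N}$ into the hypothesized inequality, apply the estimates $D(h)+1 \lesssim B/N^6$ and $\norm{h}_{L^p_{-q}} \gtrsim B^{2-3/p}N^{-4-q-1/p}$, and exploit that the $B$-exponent $2-3/p$ exceeds $1$ exactly when $p>3$. The paper simply fixes $N$ and sends $B\to\infty$, which avoids your intermediate choice of $\alpha$; your coupling $B=N^{\alpha}$ with $N\to\infty$ reaches the same contradiction by an equivalent limiting procedure.
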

\begin{proof}
     Suppose that $p>3$.
    Fix $N$ and send $B\to \infty$. It follows from \eqref{e: optimal1} that
    \[
    \frac{D(h)+1}{   \norm{h}_{L^p_{-q}(\R^3)}  } \lesssim \frac{BN^{-6}}{B^{2-\frac{3}{p}} N^{-4-q-\frac{1}{p}}}=B^{-1+\frac{3}{p}} N^{q-2+\frac{1}{p}}\to 0,
    \]
    which is a contradiction.
\end{proof}
Now we let $p=3$.
\begin{cor}
    Let $q\in \R$ be given. If there exists a constant $\kappa_2>0$ such that
    \[
    D(h) + 1 \geq \kappa_2 \norm{h}_{L^3_{-q}(\R^3)},
    \]
    for every solution $h$ of \eqref{e: Landau} which satisfies\eqref{e: hydronamic1}, \eqref{e: hydronamic2}, and \eqref{e: hydronamic3}, then $q\geq 5/3$.
\end{cor}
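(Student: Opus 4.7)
The plan is to reuse the family $h = \max(f,g)$ constructed above and tune $B$ against $N$ so that the upper bound on $D(h)$ stays bounded while the lower bound on $\norm{h}_{L^3_{-q}(\R^3)}$ blows up whenever $q < 5/3$. Concretely, I would take $B = N^6$; this satisfies the standing hypothesis $B \geq N^6$, and by Proposition \ref{prop: hydrodynamic} all hydrodynamic bounds for $h$ are then uniform in $N$, so the constant $\kappa_2$ in the assumed inequality does not depend on $N$ or $B$.

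With this choice, the corollary at the end of the previous subsection yields $D(h) \lesssim B/N^6 = 1$, hence $D(h) + 1$ is bounded uniformly in $N$. On the other hand, substituting $B = N^6$ into \eqref{e: optimal2} gives
\[
\norm{h}_{L^3_{-q}(\R^3)} \gtrsim B\, N^{-q - 13/3} = N^{5/3 - q}.
\]
If $q < 5/3$, then $N^{5/3 - q} \to \infty$ as $N \to \infty$, so the assumed lower bound $D(h) + 1 \geq \kappa_2 \norm{h}_{L^3_{-q}(\R^3)}$ with $\kappa_2 > 0$ would force the left-hand side to diverge, contradicting the uniform bound $D(h) + 1 \lesssim 1$. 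This contradiction yields $q \geq 5/3$.

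There is no genuine obstacle once the preceding estimates are in place; the only real choice to make is the scaling $B = N^6$, which simultaneously respects the constraint $B \geq N^6$, makes the $D(h)$-bound critically balanced against the additive $1$, and leaves precisely the excess factor $N^{5/3 - q}$ in the $L^3_{-q}$-norm. This is exactly the scaling at which the threshold exponent $5/3$ is isolated, mirroring the way the scaling $B = N^6$ would also be natural for excluding $p > 3$ in the previous corollary.
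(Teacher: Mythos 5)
Your proof is correct and follows essentially the same strategy as the paper: reuse the family $h=\max(f,g)$, invoke the upper bound $D(h)\lesssim B/N^6$ and the lower bound \eqref{e: optimal2}, and run the scales against each other to force a contradiction when $q<5/3$. The only difference is cosmetic: you pick the critical scaling $B=N^6$ (so that $D(h)+1$ stays uniformly bounded while $\norm{h}_{L^3_{-q}}\gtrsim N^{5/3-q}$ blows up), whereas the paper takes $B=N^7$ and instead shows the ratio $(D(h)+1)/\norm{h}_{L^3_{-q}}\lesssim N^{q-5/3}\to 0$; both choices respect the standing constraint $B\geq N^6$ and isolate the same threshold exponent.
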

\begin{proof}
    Suppose that $q<5/3$.
    Fix $B=N^7$ and send $N \to \infty$. It follows from \eqref{e: optimal2} that
    \[
    \frac{D(h)+1}{   \norm{h}_{L^3_{-q}(\R^3)}  } \lesssim \frac{N}{N^{-q+\frac{8}{3}}} =N^{q-\frac{5}{3}} \to 0,
    \]
    which is a contradiction.
\end{proof}

\section*{Acknowledgments}
I would like to thank my advisor, Luis Silvestre, for helpful discussions and feedback. I would also like to thank Bill Cooperman and Elias Manuelides for kindly reading the first draft of this paper. Finally, I would like to thank the anonymous referees for their valuable comments.

\bibliographystyle{abbrv}
\bibliography{cite}

\begin{thebibliography}{10}

\bibitem{adams2003book}
R.~A. Adams and J.~J.~F. Fournier.
\newblock {\em Sobolev spaces}, volume 140 of {\em Pure and Applied Mathematics
  (Amsterdam)}.
\newblock Elsevier/Academic Press, Amsterdam, second edition, 2003.

\bibitem{alexandre2000entropy}
R.~Alexandre, L.~Desvillettes, C.~Villani, and B.~Wennberg.
\newblock Entropy dissipation and long-range interactions.
\newblock {\em Archive for rational mechanics and analysis}, 152(4):327--355,
  2000.

\bibitem{alexandre2002boltzmann}
R.~Alexandre and C.~Villani.
\newblock On the {B}oltzmann equation for long-range interactions.
\newblock {\em Comm. Pure Appl. Math.}, 55(1):30--70, 2002.

\bibitem{alexandre2004landau}
R.~Alexandre and C.~Villani.
\newblock On the {L}andau approximation in plasma physics.
\newblock {\em Ann. Inst. H. Poincar\'{e} C Anal. Non Lin\'{e}aire},
  21(1):61--95, 2004.

\bibitem{cabrera2023diffusion}
R.~Cabrera, M.~Gualdani, and N.~Guillen.
\newblock Regularization estimates of the landau-coulomb diffusion.
\newblock {\em preprint arXiv:2310.16012}, 2023.

\bibitem{calderon1961book}
A.-P. Calder\'on.
\newblock Lebesgue spaces of differentiable functions and distributions.
\newblock In {\em Proc. {S}ympos. {P}ure {M}ath., {V}ol. {IV}}, pages 33--49.
  Amer. Math. Soc., Providence, RI, 1961.

\bibitem{cameron2018global}
S.~Cameron, L.~Silvestre, and S.~Snelson.
\newblock Global a priori estimates for the inhomogeneous {L}andau equation
  with moderately soft potentials.
\newblock {\em Ann. Inst. H. Poincar\'{e} C Anal. Non Lin\'{e}aire},
  35(3):625--642, 2018.

\bibitem{chaker2020coercivity}
J.~Chaker and L.~Silvestre.
\newblock Coercivity estimates for integro-differential operators.
\newblock {\em Calc. Var. Partial Differential Equations}, 59(4):Paper No. 106,
  20, 2020.

\bibitem{chaker2022entropy}
J.~Chaker and L.~Silvestre.
\newblock Entropy dissipation estimates for the {B}oltzmann equation without
  cut-off.
\newblock {\em Kinet. Relat. Models}, 16(5):748--763, 2023.

\bibitem{desvillettes1992grazing}
L.~Desvillettes.
\newblock On asymptotics of the {B}oltzmann equation when the collisions become
  grazing.
\newblock {\em Transport Theory Statist. Phys.}, 21(3):259--276, 1992.

\bibitem{desvillettes2015entropy}
L.~Desvillettes.
\newblock Entropy dissipation estimates for the {L}andau equation in the
  {C}oulomb case and applications.
\newblock {\em J. Funct. Anal.}, 269(5):1359--1403, 2015.

\bibitem{desvillettes2005boltzmann}
L.~Desvillettes and C.~Villani.
\newblock On the trend to global equilibrium for spatially inhomogeneous
  kinetic systems: the {B}oltzmann equation.
\newblock {\em Invent. Math.}, 159(2):245--316, 2005.

\bibitem{golse2022partial}
F.~Golse, M.~P. Gualdani, C.~Imbert, and A.~Vasseur.
\newblock Partial regularity in time for the space-homogeneous {L}andau
  equation with {C}oulomb potential.
\newblock {\em Ann. Sci. \'{E}c. Norm. Sup\'{e}r. (4)}, 55(6):1575--1611, 2022.

\bibitem{golse2022partialarxiv}
F.~Golse, C.~Imbert, S.~Ji, and A.~F. Vasseur.
\newblock Local regularity for the space-homogeneous landau equation with very
  soft potentials.
\newblock {\em preprint arXiv:2206.05155}, 2024.

\bibitem{goudon1997fokker}
T.~Goudon.
\newblock On {B}oltzmann equations and {F}okker-{P}lanck asymptotics: influence
  of grazing collisions.
\newblock {\em J. Statist. Phys.}, 89(3-4):751--776, 1997.

\bibitem{gressman2011sharp}
P.~T. Gressman and R.~M. Strain.
\newblock Sharp anisotropic estimates for the boltzmann collision operator and
  its entropy production.
\newblock {\em Advances in Mathematics}, 227(6):2349--2384, 2011.

\bibitem{guillen2023global}
N.~Guillen and L.~Silvestre.
\newblock The landau equation does not blow up.
\newblock {\em preprint arXiv:2311.09420}, 2023.

\bibitem{imbert2021regularity}
C.~Imbert and L.~Silvestre.
\newblock Regularity for the boltzmann equation conditional to macroscopic
  bounds.
\newblock {\em EMS Surveys in Mathematical Sciences}, 7(1):117--172, 2021.

\bibitem{imbert2022global}
C.~Imbert and L.~E. Silvestre.
\newblock Global regularity estimates for the {B}oltzmann equation without
  cut-off.
\newblock {\em J. Amer. Math. Soc.}, 35(3):625--703, 2022.

\bibitem{jones1981sobolev}
P.~W. Jones.
\newblock Quasiconformal mappings and extendability of functions in {S}obolev
  spaces.
\newblock {\em Acta Math.}, 147(1-2):71--88, 1981.

\bibitem{mouhot2006linear}
C.~Mouhot.
\newblock Explicit coercivity estimates for the linearized {B}oltzmann and
  {L}andau operators.
\newblock {\em Comm. Partial Differential Equations}, 31(7-9):1321--1348, 2006.

\bibitem{luke2006sobolev}
L.~G. Rogers.
\newblock Degree-independent {S}obolev extension on locally uniform domains.
\newblock {\em J. Funct. Anal.}, 235(2):619--665, 2006.

\bibitem{silvestre2016new}
L.~Silvestre.
\newblock A new regularization mechanism for the {B}oltzmann equation without
  cut-off.
\newblock {\em Comm. Math. Phys.}, 348(1):69--100, 2016.

\bibitem{silvestre2017upper}
L.~Silvestre.
\newblock Upper bounds for parabolic equations and the landau equation.
\newblock {\em Journal of Differential Equations}, 262(3):3034--3055, 2017.

\bibitem{silvestre2022regularity}
L.~Silvestre.
\newblock Regularity estimates and open problems in kinetic equations.
\newblock {\em preprint arXiv:2204.06401}, 2022.

\bibitem{stein1970book}
E.~M. Stein.
\newblock Singular integrals, harmonic functions, and differentiability
  properties of functions of several variables.
\newblock In {\em Singular {I}ntegrals ({P}roc. {S}ympos. {P}ure {M}ath.,
  {C}hicago, {I}ll., 1966)}, pages 316--335. Amer. Math. Soc., Providence, RI,
  1967.

\bibitem{villani1998new}
C.~Villani.
\newblock On a new class of weak solutions to the spatially homogeneous
  {B}oltzmann and {L}andau equations.
\newblock {\em Arch. Rational Mech. Anal.}, 143(3):273--307, 1998.

\bibitem{villani1999regularity}
C.~Villani.
\newblock Regularity estimates via the entropy dissipation for the spatially
  homogeneous {B}oltzmann equation without cut-off.
\newblock {\em Rev. Mat. Iberoamericana}, 15(2):335--352, 1999.

\end{thebibliography}

\end{document}